\newtheorem{theorem}{Theorem}
\theoremstyle{plain}
\numberwithin{equation}{section}
\begin{document}
\title[Singular minimal translation surfaces]{Singular minimal translation
surfaces in Euclidean spaces endowed with semi-symmetric connections}
\author{AYLA ERDUR$^{1}$}
\address{$^{1,3}$DEPARTMENT OF MATHEMATICS, FACULTY OF SCIENCE AND ART,
TEKIRDAG NAMIK KEMAL UNIVERSITY, TEKIRDAG 59100, TURKEY}
\author{MUHITTIN EVREN AYDIN$^{2}$}
\address{$^{2}$DEPARTMENT OF MATHEMATICS, FACULTY OF SCIENCE, FIRAT
UNIVERSITY, ELAZIG, 23200, TURKEY}
\author{MAHMUT ERGUT$^{3}$}
\email{aerdur@nku.edu.tr, meaydin@firat.edu.tr, mergut@nku.edu.tr}
\subjclass[2000]{Primary 53A10 Secondary 53C42, 53C05}
\keywords{Singular minimal surface, translation surface, $\alpha -$
catenary, semi-symmetric connection}

\begin{abstract}
In this paper, we study and classify singular minimal translation surfaces
in a Euclidean space of dimension $3$ endowed with a certain semi-symmetric
(non-)metric connection.
\end{abstract}

\maketitle

\section{INTRODUCTION}

Let $\mathbb{R}^{3}$ denote a Euclidean space of dimension $3$  with the
canonical metric $\left\langle ,\right\rangle $ and $\left( x,y,z\right) $
the rectangular coordinates of $\mathbb{R}^{3}$. A surface $M$ in $\mathbb{R}%
^{3}$ is called \textit{translation surface} if it can be written as the sum
of two curves. Then, a local parametrization $\sigma $ on $M$
follows $\sigma \left( s_{1},s_{2}\right) =\gamma _{1}\left( s_{1}\right)
+\gamma _{2}\left( s_{2}\right) ,$ for $\gamma _{i}:I_{i}\subset \mathbb{%
R\rightarrow R}^{3}$, $i=1,2,$ \cite{Darboux}. In the special case\
that the curves $\gamma _{i}$ lie in orthogonal planes, up to a change of
coordinates, the surface $M$ can be locally expressed by the explicit
form $z=f(x)+g(y)$ for smooth functions $f,g$ of single variable. If such a surface is minimal, i.e. its mean curvature vanishes identically, then it is either a plane or describes the \textit{Scherk surface }\cite{Scherk}.

While this sort of surfaces has been studied in classical manner since the
former half of nineteenth century (see \cite{Aydin1}, \cite{Dillen1}-\cite%
{Dillen3}, \cite{Goemans}, \cite{Inoguchi}-\cite{Lopez3}, \cite{Lopez6}, \cite{Munteanu}, \cite{Scherk}-\cite%
{Yang2}, \cite{Yoon1}, \cite{Yoon2}), their complete classification/characterization in $\mathbb{R}^{3}$ emposing natural curvature conditions (e.g. minimality, flatness and having
nonzero constant mean or Gaussian curvatures) have been recently found, see \cite%
{HL1}-\cite{HL2}. Yet, in higher dimensions and different ambient spaces, there are still numerous unsolved problems.

On the other hand, a semi-symmetric metric (resp. non-metric) connection on a Riemannian manifold were defined by
Hayden \cite{Hayden} (resp. Agashe \cite{Agashe1}) and since then has been studied by many authors. For example, see \cite{Agashe2}, \cite{Akyol}, \cite{Chaubey}, \cite{De}, \cite{Dogru}, 
\cite{Gozutok}, \cite{Imai}, \cite{Murat}-\cite{Ozgur}, \cite%
{Yano1}, \cite{Yano2}, \cite{Yucesan2}.

As can be seen from the great number of published studies, the notions of translation surfaces and semi-symmetric (non-) metric connection have great interest and very recently Wang \cite{Wang} combined these seperate research areas into one, which came up with a new perspective. In the cited paper, the author introduced and obtained minimal translation surfaces in 3-dimensional space forms endowed with a certain semi-symmetric (non-)metric connection.

In this study we mainly concern with singular minimal surfaces in $\mathbb{R}^{3}$, namely those surfaces satisfying an equation of mean curvature type (see \cite{Dierkes2}). The notion of singular minimal surface is a generalization of two-dimensional analogue of the catenary which is known as a model for the surfaces with the lowest gravity center, in other words, one has minimal potential energy \cite{Bohme}. In this context, the present study aims to contribute to Wang's perspective by considering singular minimal translation surfaces in $\mathbb{R}^{3}$ endowed with a certain semi-symmetric (non-)metric connection.

In order to explicitly initiate the notion of singular minimality, we begin with the
one-dimensional case: let $\gamma :I\subset \mathbb{R\rightarrow R}^{2}$ be
a parametrized curve and $\mathbf{u}\in \mathbb{R}^{2}$ a fixed unit vector
and $\alpha $ some real constant. Then the curve $\gamma $ is called $\alpha
-$\textit{catenary} (see \cite{Dierkes2}) if the following holds 
\begin{equation}
\kappa \left( s\right) =\alpha \frac{\left\langle \mathbf{n}\left( s\right) ,%
\mathbf{u}\right\rangle }{\left\langle \gamma \left( s\right) ,\mathbf{u}%
\right\rangle },  \label{1.1}
\end{equation}%
where $\kappa $ and $\mathbf{n}$ are the curvature and principle unit normal
vector field of $\gamma $. Up to a change of coordinates, one can assumed as 
$\mathbf{u}=\left( 0,1\right) $ and $\gamma $ a graph locally given by $%
\gamma \left( s\right) =\left( s,f\left( s\right) \right) ,$ for $f:I\subset 
\mathbb{R}\rightarrow \mathbb{R}^{+}.$ Thereby, for $\alpha =1,$ Eq. %
\eqref{1.1} writes%
\begin{equation}
\frac{f^{\prime \prime }}{1+\left( f^{\prime }\right) ^{2}}=\frac{1}{f},
\label{1.2}
\end{equation}%
for each $s\in I.$ The solution of Eq. \eqref{1.2} is the catenary $f\left(
s\right) =\frac{1}{\lambda }\cosh \left( \lambda s+\mu \right) ,$ $\lambda
,\mu \in \mathbb{R},$ $\lambda \neq 0$, see \cite{Lopez10}.

As its two-dimensional analogue, Eq. \eqref{1.2} has a remarkable a physical point of view, which can be
initiated as follows: let the direction of gravity be choosen as $y-$axis.
Then Eq. \eqref{1.2} defines a configuration in which a uniform chain, whose
two ends are fixed and hanged under its own weight, is in balance with the
effect of the gravitational field. So, a catenary actually minimizes
potential energy under the influence of gravity force, in other words has
the lowest center of gravity (e.g. \cite{Gil}).

Let us now consider the smooth immersion $\sigma :M\rightarrow \mathbb{R}%
_{+}^{3}\left( \mathbf{u}\right) $ of an oriented surface $M$ in the
halfspace 
\begin{equation*}
\mathbb{R}_{+}^{3}\left( \mathbf{u}\right) =\left\{ p\in \mathbb{R}%
^{3},\left\langle p,\mathbf{u}\right\rangle >0\right\} ,
\end{equation*}%
for a fixed unit vector $\mathbf{u}\in \mathbb{R}^{3}$. Then, the
potential $\alpha -$energy of $\sigma $ in the direction of $\mathbf{u}$ is
defined by (\cite{Lopez7,Lopez8})
\begin{equation}
E\left( \varphi \right) =\int_{M}\left\langle \sigma \left( q\right)
,u\right\rangle ^{\alpha }dM,\text{ }q\in M,  \label{1.3}
\end{equation}%
where $dM$ refers to the measure on $M$ with respect to the induced metric
tensor from $\mathbb{R}^{3}$. If $\sigma $ is a critical point of Eq. \eqref{1.3}, it then
follows 
\begin{equation}
2H=\alpha \frac{\left\langle \xi ,\mathbf{u}\right\rangle }{\left\langle
\sigma ,\mathbf{u}\right\rangle },  \label{1.4}
\end{equation}%
where $H$ and $\xi $ are the mean curvature and unit normal vector field on $%
M.$ A surface in $\mathbb{R}^{3}$ fulfilling Eq. \eqref{1.4} is called 
\textit{singular minimal surface }or $\alpha $-\textit{minimal surface}
see \cite{Dierkes1}, \cite{Dierkes2}.

Eq. \eqref{1.4} is clearly an equation of mean curvature type and reduces to
the classical minimal surface equation when $\alpha =0$ \cite[p. 17]{Lopez4}%
. If we take $\mathbf{u}=(0,0,1)$ and $\alpha =1$ in Eq. \eqref{1.4}, then
the surface $M$ is said to be \textit{two-dimensional analogue of the
catenary} \cite{Bohme}.

L\'{o}pez \cite{Lopez7} proved that a singular minimal translation
surface in $\mathbb{R}^{3}$ with respect to a horizontal or a
vertical direction is a $\alpha -$\textit{catenary cylinder, }a generalized
cylinder (see \cite[p. 439]{Gray}) whose the base curve is a $\alpha -$catenary. This result was generalized to higher dimensions by the present authors \cite{EAM}.

Noting that the mean curvature $H$ in Eq. \eqref{1.4} is given with respect to the
Levi-Civita connection on $\mathbb{R}^{3}$, we modify it by considering the mean
curvatures arising via special semi-symmetric metric and non-metric
connections $\nabla $ and $D$ given by Eqs. \eqref{2.1} and \eqref{2.2}. 
We call the modified concepts $\nabla -$ and $D-$singular minimality and these allow us non-trivial and new problems. One of the problems is to find $\nabla -$ and $D-$singular minimal translation surfaces with respect to a horizontal direction and we solve it completely. 

\section{PRELIMINARIES}

Let $\left( \tilde{M},g\right) $ be a Riemannian manifold of dimension $3$
and $\tilde{\nabla}$ an affine connection on $\tilde{M}.$ Let us denote the
set of sections of a vector bundle $E\rightarrow \tilde{M}$ by $\Gamma
\left( E\right) $ and the set of tensor fields of type $\left( r,s\right) $
on $\tilde{M}$ by $\Gamma \left( T\tilde{M}^{\left( r,s\right) }\right) $.
Then the \textit{torsion tensor field} $T\in \Gamma \left( T\tilde{M}%
^{\left( 1,2\right) }\right) $ of $\tilde{\nabla}$ is defined by%
\begin{equation*}
T\left( X,Y\right) =\tilde{\nabla}_{X}Y-\tilde{\nabla}_{Y}X-\left[ X,Y\right]
,
\end{equation*}%
for $X,Y\in \Gamma \left( T\tilde{M}\right) $. Then the connection $\tilde{\nabla}
$ is called (see \cite{Murat},\cite{Yano2})

\begin{enumerate}
\item a (resp. \textit{non-}) \textit{symmetric connection } if $T$ (resp. does not)
vanishes identically;

\item a (resp. \textit{non-}) \textit{metric connection} if $g$ is (resp. not) parallel;

\item a \textit{semi-symmetric connection }if the following holds 
\begin{equation*}
T(X,Y)=\pi \left( Y\right) X-\pi \left( X\right) Y,
\end{equation*}%
for $\pi \left( X\right) =g\left( X,W\right) $, $\pi \in \Gamma \left( T%
\tilde{M}^{\left( 0,1\right) }\right) $, $W\in \Gamma \left( T\tilde{M}%
\right) $;

\item a \textit{Levi-Civita connection} if it is both symmetric
and metric.
\end{enumerate}

Let $\tilde{M}=\mathbb{R}^3$ and $\left\{ \partial _{x},\partial _{y},\partial _{z}\right\} $ the
standard basis on $\mathbb{R}^3$. Consider the certain semi-symmetric
metric and non-metric connections, respectively \cite{Wang}%
\begin{equation}
\nabla _{X}Y=\nabla _{X}^{L}Y+g\left( Y,\partial _{z}\right) X-g\left(
X,Y\right) \partial _{z}  \label{2.1}
\end{equation}%
and%
\begin{equation}
D_{X}Y=\nabla _{X}^{L}Y+g\left( Y,\partial _{z}\right) X,  \label{2.2}
\end{equation}%
where $\nabla ^{L}$ is the Levi-Civita connection on $\mathbb{R}^3$ and $X,Y\in
\Gamma \left( T\mathbb{R}^3\right) $. For Eqs. \eqref{2.1} and \eqref{2.2}, the
nonzero derivatives are given by%
\begin{equation*}
\nabla _{\partial _{x}}\partial _{x}=-\partial _{z},\text{ }\nabla
_{\partial _{x}}\partial _{z}=\partial _{x},\text{ }\nabla _{\partial
_{y}}\partial _{y}=-\partial _{z},\text{ }\nabla _{\partial _{y}}\partial
_{z}=\partial _{y},\text{ }
\end{equation*}%
and%
\begin{equation*}
D_{\partial _{x}}\partial _{z}=\partial _{x},\text{ }D_{\partial
_{y}}\partial _{z}=\partial _{y},\text{ }D_{\partial _{z}}\partial
_{z}=\partial _{z}.
\end{equation*}

Let $M$ be an oriented immersed surface into $\mathbb{R}^3.$ For any $X,Y\in
\Gamma \left( T\mathbb{R}^3\right) $ and $\xi \in \Gamma \left( T \mathbb{R}^{3^{\perp }}\right) $,
the \textit{Gauss formulae} with respect to $\nabla $ and $D$ follow 
\begin{equation*}
\nabla _{X}Y=\left( \nabla _{X}Y\right) ^{\intercal }+h^{\nabla }\left(
X,Y\right) \xi ,
\end{equation*}%
and 
\begin{equation*}
D_{X}Y=\left( \nabla _{X}Y\right) ^{\intercal }+h^{D}\left( X,Y\right) \xi ,
\end{equation*}%
where $\intercal $ is the projection on the tangent bundle of $M$ and $%
h^{\nabla }$ and $h^{D}$ are so-called the\textit{\ second fundamental forms}
with respect to $\nabla $ and $D,$ respectively. Let $\left\{
e_{1},e_{2}\right\} $ be an orthonormal tangent frame on $M$. Then the%
\textit{\ mean curvatures} of $M$ with respect to $\nabla $ and $D$ are
defined by 
\begin{equation*}
H^{\nabla }=\frac{1}{2}\left[ h^{\nabla }\left( e_{1},e_{1}\right)
+h^{\nabla }\left( e_{2},e_{2}\right) \right]
\end{equation*}%
and 
\begin{equation*}
H^{D}=\frac{1}{2}\left[ h^{D}\left( e_{1},e_{1}\right) +h^{D}\left(
e_{2},e_{2}\right) \right] .
\end{equation*}%
The surface $M$ is said to be \textit{minimal} with respect to $\nabla $ (resp. $D$) if $%
H^{\nabla }$ (resp. $H^{D}$) vanishes, respectively.

Let $g_{ij},$ $1\leq i,j\leq 2,$ denote the components of the induced metric
tensor on $M$ from the canonical metric. Then the mean
curvatures $H^{\nabla }$ and $H^{D}$\ are respectively given by%
\begin{equation}
H^{\nabla }=\frac{g_{22}h_{11}^{\nabla }-g_{12}\left( h_{12}^{\nabla
}+h_{21}^{\nabla }\right) +g_{11}h_{22}^{\nabla }}{2\det g_{ij}}
\label{2.3}
\end{equation}%
and 
\begin{equation}
H^{D}=\frac{g_{22}h_{11}^{D}-g_{12}\left( h_{12}^{D}+h_{21}^{D}\right)
+g_{11}h_{22}^{D}}{2\det g_{ij}},  \label{2.4}
\end{equation}%
where $h_{ij}^{\nabla }=  \left\langle \nabla _{f_{i}}f_{j},\xi \right\rangle  $ and $%
h_{ij}^{D}= \left\langle D _{f_{i}}f_{j},\xi \right\rangle ,$ for some basis $\left\{
f_{1},f_{2}\right\} $ of $\Gamma (TM).$

We utterly enable to introduce the notion of singular minimality with
respect to the connections $\nabla $ and $D$: let $\sigma :M\rightarrow 
\mathbb{R}^{3}$ be a smooth immersion of an oriented surface $M\ $and $%
\mathbf{u}\in \mathbb{R}^{3}$ a fixed unit vector. Let $H^{\nabla }$ and $%
H^{D}$ denote the mean curvatures with respect to $\nabla $ and $D$,
respectively. The surface $M$ is called $\nabla -$%
\textit{singular minimal surface} with respect to the vector $\mathbf{u}$\
if it holds%
\begin{equation}
2H^{\nabla }=\alpha \frac{\left\langle \xi ,\mathbf{u}\right\rangle }{%
\left\langle \sigma ,\mathbf{u}\right\rangle },  \label{2.5}
\end{equation}%
where $\xi $ is the unit normal vector field on $M$ and $\alpha$ a real constant. Accordingly the surface $M$
is called $D-$\textit{singular minimal surface} with respect to the vector $%
\mathbf{u}$ if it holds 
\begin{equation}
2H^{D}=\alpha \frac{\left\langle \xi ,\mathbf{u}\right\rangle }{\left\langle
\sigma ,\mathbf{u}\right\rangle }.  \label{2.6}
\end{equation}
It is obvious that these notions coincide with the usual minimality when $\alpha =0$ and so $\alpha \neq 0$ is assumed throughout the study.

\section{$\protect\nabla -$singular minimal translation surfaces}

In this section, we characterize $\nabla -$singular minimal translation surfaces in $%
\mathbb{R}^{3}$. As the translation property of the surface changes, Eq. \eqref{2.5} generates different tasks. Because there are three types of translation surfaces as follows
\begin{equation*}
z=f(x)+g(y), \text{ } y=f(x)+g(z), \text{ } x=f(y)+g(z), 
\end{equation*}
we state three separate results.

\begin{theorem}
A $\nabla -$singular minimal translation surface in $\mathbb{R}^{3}$ of type $z=f(x)+g(y)$ with respect to a horizontal vector $\mathbf{u}$ is a generalized cylinder and one of the following occurs

\begin{enumerate}
\item $f\left( x\right) =c_{1}$ and $g\left( y\right) =-\frac{1}{2}\ln
\left\vert \cos \left( 2y+c_{2}\right) \right\vert +c_{3},$

\item $g\left( y\right) =c_{4}+c_{5}y$ and $f$ is a solution of the ordinary
differential equation (ODE) 
\begin{equation}
f^{\prime \prime }=\frac{-\alpha }{\left( 1+c_{5}^{2}\right) x}\left(
f^{\prime }\right) ^{3}+\frac{2}{1+c_{5}^{2}}\left( f^{\prime }\right) ^{2}-%
\frac{\alpha }{x}f^{\prime }+2,  \label{3.1}
\end{equation}%
where $c_{1},...,c_{5}\in \mathbb{R}$ and $f^{\prime }=\frac{df}{dx}$, etc.
\end{enumerate}
\end{theorem}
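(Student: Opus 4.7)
My plan is to realise the surface as the graph $\sigma(x,y)=(x,y,f(x)+g(y))$, compute $H^{\nabla}$ explicitly, and analyse the resulting PDE in $f,g$ by a case split. Write $p=f'(x)$, $q=g'(y)$ and $W=\sqrt{1+p^{2}+q^{2}}$; using \eqref{2.1} a direct computation gives
\[
\nabla_{\sigma_{x}}\sigma_{x}=p\,\partial_{x}+(f''-1)\partial_{z},\qquad \nabla_{\sigma_{x}}\sigma_{y}=q\,\partial_{x},\qquad \nabla_{\sigma_{y}}\sigma_{y}=q\,\partial_{y}+(g''-1)\partial_{z}.
\]
With unit normal $\xi=W^{-1}(-p,-q,1)$ these yield $h^{\nabla}_{11}=(f''-1-p^{2})/W$, $h^{\nabla}_{22}=(g''-1-q^{2})/W$ and $h^{\nabla}_{12}=h^{\nabla}_{21}=-pq/W$, so \eqref{2.3} produces
\[
2H^{\nabla}=\frac{(1+q^{2})f''+(1+p^{2})g''-2W^{2}}{W^{3}}.
\]
Taking $\mathbf{u}=(1,0,0)$ (the case $\mathbf{u}=(0,1,0)$ is symmetric under the swap of $f$ and $g$; a general horizontal direction can be treated by a parallel analysis), \eqref{2.5} becomes the master PDE
\[
x\bigl[(1+q^{2})f''+(1+p^{2})g''-2W^{2}\bigr]+\alpha\,p\,W^{2}=0.
\]

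The first dichotomy is whether $g$ is affine. If $g''\equiv 0$, then $g(y)=c_{4}+c_{5}y$ and the master PDE collapses to an ODE in $f$ alone; solving for $f''$ and expanding $1+p^{2}+c_{5}^{2}=(1+c_{5}^{2})+p^{2}$ reproduces exactly \eqref{3.1}, giving case~(2). Otherwise, I restrict to an open subset where $g'g''\neq 0$ and differentiate the master PDE with respect to $y$, producing
\[
x\bigl[(1+p^{2})g'''+2g'g''(f''-2)\bigr]+2\alpha\,p\,g'g''=0.
\]
Dividing by $g'g''$ separates the variables, so $g'''/(g'g'')$ must equal a constant $\lambda$; integration gives $g''=\tfrac{\lambda}{2}(g')^{2}+C_{1}$, while the $x$-side of the separated equation yields
\[
f''(x)=\frac{4-\lambda}{2}-\frac{\alpha\,f'}{x}-\frac{\lambda}{2}(f')^{2}.
\]

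Substituting both expressions back into the master PDE, the pure $q^{2}$ terms and the $p^{2}q^{2}$ terms cancel identically, and the term $-\alpha p(1+q^{2})/x$ produced by $(1+q^{2})f''$ combines with $\alpha p\,W^{2}$ to leave only $\alpha p^{3}$; the PDE therefore reduces to the algebraic identity
\[
\mu\,x+(\mu-2)\,x\,p^{2}+\alpha\,p^{3}=0,\qquad \mu:=C_{1}-\tfrac{\lambda}{2},
\]
required to hold for all $x$. The crux of the argument, and the step I expect to be the main obstacle, is ruling out $\mu\neq 0$: in that case, differentiating the identity, substituting $p'=f''$ from the ODE above, and matching coefficients of powers of $p$ produces a polynomial identity whose top-degree coefficient forces $\lambda(\mu-2)=0$, and subsequent coefficients cascade into constraints incompatible with $\alpha\neq 0$. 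Once $\mu=0$ is established, the identity factors as $p^{2}(\alpha p-2x)=0$; the branch $p=2x/\alpha$ would force $f''=2/\alpha$, inconsistent with the ODE for $f''$ whenever $\lambda\neq 0$, so $p\equiv 0$ and $f\equiv c_{1}$. The ODE for $f''$ then forces $\lambda=4$, and $\mu=0$ gives $C_{1}=2$, so $g''=2(1+(g')^{2})$; the substitution $g'=\tan\theta$ reduces this to $\theta'=2$, yielding $g(y)=-\tfrac{1}{2}\ln|\cos(2y+c_{2})|+c_{3}$, which is case~(1). In both scenarios the surface is a generalised cylinder, with rulings along $(1,0,0)$ in (1) and along $(0,1,c_{5})$ in (2).
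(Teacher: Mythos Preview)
Your proof is correct and shares the paper's overall strategy—compute $H^{\nabla}$, derive the master PDE, differentiate in $y$, separate variables to obtain $g''=\tfrac{\lambda}{2}(g')^{2}+C_{1}$ together with an ODE for $f$, then substitute back to an algebraic constraint—but the execution differs in two respects worth recording. Your case split is simply on whether $g$ is affine, which is more economical than the paper's three-way split on $f$ (constant, linear non-constant, $f''\neq 0$) followed by a split on $g$; in particular you recover case~(1) inside the $g''\neq 0$ branch rather than treating $f$ constant at the outset. In the endgame the paper passes to the variable $u=f'/x$, expresses $f''$ purely in terms of $u$ (their (3.9)), and derives a polynomial identity in $u$ whose cubic coefficient forces $\alpha=-1/3$ before reaching a contradiction; you instead keep $p=f'$ and obtain a polynomial in $p$ whose top coefficient forces $\lambda(\mu-2)=0$, which then cascades. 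Both routes close the argument, but yours constrains the $g$-parameters directly rather than $\alpha$, which is arguably more natural here. One phrasing to tighten: ``matching coefficients of powers of $p$'' is only legitimate \emph{after} you have used the identity $\mu x+(\mu-2)xp^{2}+\alpha p^{3}=0$ to eliminate $x$; the raw differentiated-and-substituted expression still mixes $x$, $x^{-1}$ and $p$ and is not a polynomial in $p$ alone. Since you correctly land on the leading coefficient $\lambda(\mu-2)$, you evidently performed that elimination, but the write-up should say so.
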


\begin{proof}
The unit normal vector field
and mean curvature are computed by 
\begin{equation*}
\xi =\frac{-f^{\prime }\partial _{x}-g^{\prime }\partial _{y}+\partial _{z}}{%
\sqrt{1+\left( f^{\prime }\right) ^{2}+\left( g^{\prime }\right) ^{2}}}
\end{equation*}%
and 
\begin{equation*}
H^{\nabla}=\frac{\left[ 1+\left( g^{\prime }\right) ^{2}\right]f^{\prime \prime }
+\left[ 1+\left( f^{\prime }\right) ^{2}\right]g^{\prime \prime } -2\left[
1+\left( f^{\prime }\right) ^{2}+\left( g^{\prime }\right) ^{2}\right] }{2%
\left[ 1+\left( f^{\prime }\right) ^{2}+\left( g^{\prime }\right) ^{2}\right]
^{\frac{3}{2}}},
\end{equation*}%
where $g^{\prime }=\frac{dg}{dy}$ and so. Without of loss of generality, we may assume that $%
\mathbf{u}=\partial _{x}.$ Then Eq. \eqref{2.5} gives%
\begin{equation}
\frac{\left[ 1+\left( g^{\prime }\right) ^{2}\right]f^{\prime \prime }
+\left[ 1+\left( f^{\prime }\right) ^{2}\right]g^{\prime \prime } -2\left[
1+\left( f^{\prime }\right) ^{2}+\left( g^{\prime }\right) ^{2}\right] }{%
1+\left( f^{\prime }\right) ^{2}+\left( g^{\prime }\right) ^{2}}=-\alpha 
\frac{f^{\prime }}{x}.  \label{3.3}
\end{equation}
To solve Eq. \eqref{3.3}, we distinguish several cases: the first case is that $f(x)=f_{0}\in \mathbb{R}.$ Thus, Eq. \eqref{3.3} reduces to $g^{\prime \prime}=2\left( g^{\prime }\right) ^{2}+2.$ Solving this implies the
first statement of the theorem. The second case is that $f^{\prime }=f_{0}\neq 0$. Hence, Eq. \eqref{3.3} reduces to the following
polynomial equation of $x$%
\begin{equation*}
\left\{ g^{\prime \prime }\left( 1+f_{0}^{2}\right) -2\left[
1+f_{0}^{2}+\left( g^{\prime }\right) ^{2}\right] \right\} x+\alpha f_{0}%
\left[ 1+f_{0}^{2}+\left( g^{\prime }\right) ^{2}\right] =0,
\end{equation*}%
which means that this case is false because the constant term of the
polynomial equation cannot be zero. The last case is that $f^{\prime \prime }\neq 0$. Then taking partial derivative Eq. \eqref{3.3}
with respect to $y$ leads to
\begin{equation}
2\left( -2+f^{\prime \prime }\right) g^{\prime }g^{\prime \prime }+\left[
1+\left( f^{\prime }\right) ^{2}\right] g^{\prime \prime \prime }=-2\alpha
g^{\prime }g^{\prime \prime }\frac{f^{\prime }}{x}.  \label{3.4}
\end{equation}%
That $g^{\prime }=g_{0},$ $g_{0}\in \mathbb{R},$ is clearly a solution of
Eq. \eqref{3.4}. Then Eq. \eqref{3.3} reduces to%
\begin{equation*}
\left[ -2-2g_{0}^{2}+\left( 1+g_{0}^{2}\right) f^{\prime \prime }-2\left(
f^{\prime }\right) ^{2}\right]x +\alpha f^{\prime }\left[ 1+g_{0}^{2}+\left(
f^{\prime }\right) ^{2}\right] =0,
\end{equation*}%
or%
\begin{equation*}
f^{\prime \prime }+\frac{\alpha }{\left( 1+g_{0}^{2}\right) x}\left(
f^{\prime }\right) ^{3}-\frac{2}{1+g_{0}^{2}}\left( f^{\prime }\right) ^{2}+%
\frac{\alpha }{x}f^{\prime }-2=0,
\end{equation*}%
which gives the second statement of the theorem. Then the proof finishes if we show that Eq. \eqref{3.4} has no solution for $f^{\prime \prime }g^{\prime \prime }\neq 0.$ By contradiction, assume that $f^{\prime \prime }g^{\prime \prime }\neq 0.$  Dividing Eq. \eqref{3.4} with $%
2g^{\prime }g^{\prime \prime },$ we get%
\begin{equation}
\left[ 1+\left( f^{\prime }\right) ^{2}\right] \frac{g^{\prime \prime \prime
}}{2g^{\prime }g^{\prime \prime }}+f^{\prime \prime }+\alpha \frac{f^{\prime
}}{x}-2=0,  \label{3.5}
\end{equation}%
yielding%
\begin{equation}
g^{\prime \prime \prime }=2cg^{\prime }g^{\prime \prime },\text{ }c\in 
\mathbb{R}.  \label{3.6}
\end{equation}%
Integrating Eq. \eqref{3.6} gives 
\begin{equation}
g^{\prime \prime }=c\left( g^{\prime }\right) ^{2}+d,\text{ }d\in \mathbb{R}.
\label{3.7}
\end{equation}%
Substituting Eq. \eqref{3.7} to Eq. \eqref{3.3} leads to%
\begin{equation}
f^{\prime \prime }+\left[ 1+\left( f^{\prime }\right) ^{2}\right] \left(
-2+d+\alpha \frac{f^{\prime }}{x}\right) =0.  \label{3.8}
\end{equation}%
From Eqs \eqref{3.5} and \eqref{3.8}, we conclude two equations%
\begin{equation}
f^{\prime \prime }=\frac{\left( -2+\alpha \frac{f^{\prime }}{x}\right)
\left( -2+d+\alpha \frac{f^{\prime }}{x}\right) }{2+c-d-\alpha \frac{%
f^{\prime }}{x}}  \label{3.9}
\end{equation}%
and 
\begin{equation}
\left( -2-c+d\right) \left( f^{\prime }\right) ^{2}+\alpha \frac{\left(
f^{\prime }\right) ^{3}}{x}+d-c=0.  \label{3.10}
\end{equation}%
Notice that the denominator in Eq. \eqref{3.9} is not zero because the contradiction $f^{\prime \prime}=0$ is obtained otherwise. Taking derivative of Eq. \eqref{3.10} leads to%
\begin{equation}
\left[ 2\left( -2-c+d\right) +\frac{3\alpha f^{\prime }}{x}\right] f^{\prime
\prime }=\alpha \left( \frac{f^{\prime }}{x}\right) ^{2}.  \label{3.11}
\end{equation}%
Considering Eq. \eqref{3.9} to Eq. \eqref{3.11} gives a polynomial equation
of $\frac{f^{\prime }}{x}$ 
\begin{equation}  \label{3.12}
\left. 
\begin{array}{c}
\alpha ^{2}\left( 1+3\alpha \right) \left( \frac{f^{\prime }}{x}\right)
^{3}+\alpha \left[ \alpha \left( -16-2c+5d\right) -2-c+d\right] \left( \frac{%
f^{\prime }}{x}\right) ^{2} \\ 
2\alpha \left[ \left( -4+d\right) ( -2-c+d)-3( -2+d) %
\right] \frac{f^{\prime }}{x}-4\left( -2-c+d\right) \left( -2+d\right) =0,%
\end{array}%
\right.
\end{equation}%
in which the fact that the leading coefficient vanish yields $\alpha =\frac{%
-1}{3}$ and therefore Eq. \eqref{3.12} reduces%
\begin{equation}  \label{3.13}
	\left. 
	\begin{array}{c}
\frac{-10+c+2d}{9}\left( \frac{f^{\prime }}{x}\right) ^{2}-\frac{2}{3}\left[
\left( -4+d\right) \left( -2-c+d\right) -3\left( -2+d\right) \right] \frac{%
f^{\prime }}{x}\\
-4\left( -2-c+d\right) \left( -2+d\right) =0.
\end{array}%
\right.
\end{equation}%
The constant term must be zero and for this reason assume that $d=2.$
Then Eq. \eqref{3.13} gives the contradiction $c=6$\ and $c=0.$ This means that $d\neq 2$ and $-2-c+d=0$ and therefore the
coefficient of the term of degree 1 cannot vanish, a contradiction.
\end{proof}

The ODE \eqref{3.1} admits a reduction of order%
\begin{equation}
u^{\prime }=\frac{-\alpha }{\left( 1+c_{5}^{2}\right) x}u^{3}+\frac{2}{%
1+c_{5}^{2}}u^{2}-\frac{\alpha }{x}u+2,  \label{3.2}
\end{equation}%
where $u=u\left( x\right) =f^{\prime }$ and $u^{\prime }\left( x\right)
=f^{\prime \prime }.$ Due to $\alpha \neq 0,$ the ODE (3.13) is an Abel
equation of the first kind given by%
\begin{equation*}
u^{\prime }=p_{3}\left( x\right) u^{3}+p_{2}\left( x\right)
u^{2}+p_{1}\left( x\right)u+p_{0}\left( x\right)
\end{equation*}
and no admits a general solution in terms of known functions, excepting very
special cases depending on the functions $p_{0}\left( x\right)
,...,p_{3}\left( x\right) .$ We refer to \cite{Pana, Polyanin} for more
details. Notice also that the surface given by the first statement of Theorem 1 is both $\nabla -$singular minimal and $\nabla -$minimal.

\begin{theorem}
A $\nabla -$singular minimal translation surface in $\mathbb{R}^{3}$ of type $y=f(x)+g(z)$ with respect to a horizontal vector $\mathbf{u}$ is either a plane parallel to $\mathbf{u}$ or a generalized
cylinder satisfying one of the following

\begin{enumerate}
\item $f(x)=c_{1}$ and 
\begin{equation*}
g\left( z\right) =\pm \frac{1}{2}\arctan \left( \frac{1}{c_{2}}\sqrt{%
e^{4z}-c_{2}^{2}}\right) +c_{3},
\end{equation*}

\item $g\left( z\right) =c_{4}+c_{5}z$ and $f$ is a solution of the ODE 
\begin{equation*}
f^{\prime \prime }=\frac{-\alpha }{\left( 1+c_{5}^{2}\right) x}\left(
f^{\prime }\right) ^{3}-\frac{2c_{5}}{1+c_{5}^{2}}\left( f^{\prime }\right)
^{2}-\frac{\alpha }{x}f^{\prime }-2c_{5},
\end{equation*}%
where $c_{1},...,c_{5}\in \mathbb{R}$, $c_{2}\neq 0$, and $f^{\prime }=\frac{df}{dx}$, etc.
\end{enumerate}
\end{theorem}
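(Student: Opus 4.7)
Parametrize the surface as $\sigma(x,z)=(x,f(x)+g(z),z)$ and compute the geometric quantities with respect to $\nabla$. The unit normal is $\xi=(-f^{\prime}\partial_{x}+\partial_{y}-g^{\prime}\partial_{z})/W$ with $W=\sqrt{1+(f^{\prime})^{2}+(g^{\prime})^{2}}$; the metric has $g_{11}=1+(f^{\prime})^{2}$, $g_{12}=f^{\prime}g^{\prime}$, $g_{22}=1+(g^{\prime})^{2}$; and using $\langle\sigma_{i},\xi\rangle=0$ together with the formula $h^{\nabla}_{ij}=h^{L}_{ij}-g_{ij}\langle\partial_{z},\xi\rangle$ one obtains
\[
H^{\nabla}=\frac{[1+(g^{\prime})^{2}]f^{\prime\prime}+[1+(f^{\prime})^{2}]g^{\prime\prime}+2W^{2}g^{\prime}}{2W^{3}}.
\]
With $\mathbf{u}=\partial_{x}$, one has $\langle\xi,\mathbf{u}\rangle=-f^{\prime}/W$ and $\langle\sigma,\mathbf{u}\rangle=x$, so Eq.~\eqref{2.5} reduces to the \emph{master equation}
\[
[1+(g^{\prime})^{2}]f^{\prime\prime}+[1+(f^{\prime})^{2}]g^{\prime\prime}+2W^{2}g^{\prime}+\frac{\alpha f^{\prime}}{x}W^{2}=0.\qquad(\star)
\]

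The argument would then split into three cases mirroring the proof of Theorem~1. If $f^{\prime}\equiv 0$, then $(\star)$ collapses to $g^{\prime\prime}+2g^{\prime}[1+(g^{\prime})^{2}]=0$; this separable ODE integrates via partial fractions to $|g^{\prime}|/\sqrt{1+(g^{\prime})^{2}}=|c_{2}|e^{-2z}$, from which solving for $g^{\prime}$ and integrating once more yields the $\arctan$ formula of statement~(1), the degenerate solution $g^{\prime}\equiv 0$ corresponding to the plane $y\equiv{}$const parallel to $\mathbf{u}$. If $f^{\prime}\equiv f_{0}\neq 0$ is constant, $(\star)$ is affine in $1/x$ and the vanishing of its $1/x$-coefficient forces $1+f_{0}^{2}+(g^{\prime})^{2}=0$, which is impossible. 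In the remaining case $f^{\prime\prime}\not\equiv 0$, I would differentiate $(\star)$ in $z$ and single out the easy subcase $g^{\prime\prime}\equiv 0$, i.e.\ $g(z)=c_{4}+c_{5}z$: substituting back into $(\star)$ and dividing by $1+c_{5}^{2}$ gives exactly the announced ODE of statement~(2).

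The main obstacle is to rule out the residual subcase $f^{\prime\prime}g^{\prime\prime}\not\equiv 0$. Rather than the long polynomial manipulations used in Theorem~1, a clean separation-of-variables should suffice. On an open set on which $f^{\prime}f^{\prime\prime}\neq 0$ and $g^{\prime}g^{\prime\prime}\neq 0$ (which exists because $f^{\prime}$ and $g^{\prime}$ are strictly monotone wherever $f^{\prime\prime}\neq 0$ and $g^{\prime\prime}\neq 0$), dividing the $z$-derivative of $(\star)$ by $2g^{\prime}g^{\prime\prime}$ produces
\[
\Bigl(f^{\prime\prime}+\tfrac{\alpha f^{\prime}}{x}\Bigr)+[1+(f^{\prime})^{2}]\cdot\frac{g^{\prime\prime\prime}+2g^{\prime\prime}}{2g^{\prime}g^{\prime\prime}}+3g^{\prime}=0.
\]
Writing this as $P(x)+Q(x)S(z)+R(z)=0$ with $P=f^{\prime\prime}+\alpha f^{\prime}/x$, $Q=1+(f^{\prime})^{2}$, $S=(g^{\prime\prime\prime}+2g^{\prime\prime})/(2g^{\prime}g^{\prime\prime})$, $R=3g^{\prime}$, and differentiating in $x$ yields $P^{\prime}+Q^{\prime}S=0$; since $Q^{\prime}=2f^{\prime}f^{\prime\prime}\neq 0$ on our set, $S$ is forced to be a constant, and then the display forces $R\equiv{}$const, i.e.\ $g^{\prime}\equiv{}$const, contradicting $g^{\prime\prime}\neq 0$. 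This exhausts the cases; the generalized-cylinder description is immediate---statement~(1) gives a cylinder with generators parallel to $\partial_{x}=\mathbf{u}$, and statement~(2) gives a cylinder with generators parallel to $(0,c_{5},1)$.
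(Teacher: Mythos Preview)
Your proof is correct and follows essentially the same route as the paper's own argument. Your choice of normal is opposite to the paper's, but since both sides of Eq.~\eqref{2.5} change sign under $\xi\mapsto-\xi$, the master equation $(\star)$ you derive is equivalent to the paper's Eq.~(3.14); the case split ($f'\equiv 0$; $f'\equiv f_0\neq 0$; $f''\not\equiv 0$ with subcases $g''\equiv 0$ and $f''g''\not\equiv 0$) and the separation-of-variables elimination of the last subcase coincide with the paper's proof almost line for line---in particular, your ``$S$ must be constant, hence $R=3g'$ must be constant'' is exactly the paper's mixed-partial argument leading to $g''=0$. Your aside that this avoids ``the long polynomial manipulations used in Theorem~1'' is apt, but note that the paper itself already uses this shorter route for Theorem~2; the heavier machinery in the paper's Theorem~1 is forced by the extra $-2W^{2}$ term there, which has no analogue in $(\star)$.
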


\begin{proof}
The unit normal vector field
and mean curvature are given by 
\begin{equation*}
\xi =\frac{f^{\prime }\partial _{x}-\partial _{y}+g^{\prime }\partial _{z}}{%
\sqrt{1+\left( f^{\prime }\right) ^{2}+\left( g^{\prime }\right) ^{2}}}
\end{equation*}%
and 
\begin{equation*}
2H^{\nabla}=-\frac{\left[ 1+\left( g^{\prime }\right) ^{2}\right] f^{\prime \prime }+%
\left[ 1+\left( f^{\prime }\right) ^{2}\right] g^{\prime \prime }+2\left[
1+\left( f^{\prime }\right) ^{2}+\left( g^{\prime }\right) ^{2}\right]
g^{\prime }}{\left[ 1+\left( f^{\prime }\right) ^{2}+\left( g^{\prime
}\right) ^{2}\right] ^{\frac{3}{2}}},
\end{equation*}%
where $g^{\prime }=\frac{dg}{dz}$ and so. Without of loss of generality we may assume that $\mathbf{u}=\partial _{x}$.
Therefore, Eq. \eqref{2.5} implies%
\begin{equation}
\frac{f^{\prime \prime }\left[ 1+\left( g^{\prime }\right) ^{2}\right]
+g^{\prime \prime }\left[ 1+\left( f^{\prime }\right) ^{2}\right] +2\left[
1+\left( f^{\prime }\right) ^{2}+\left( g^{\prime }\right) ^{2}\right]
g^{\prime }}{1+\left( f^{\prime }\right) ^{2}+\left( g^{\prime }\right) ^{2}}%
=-\alpha \frac{f^{\prime }}{x}.  \label{3.14}
\end{equation}%
To solve Eq. \eqref{3.14}, we distinguish several cases: the first case is that $f(x)=f_{0}\in \mathbb{R}$. Then, Eq. \eqref{3.14} reduces to $g^{\prime
\prime }=-2\left[ 1+\left( g^{\prime }\right) ^{2}\right] g^{\prime }.$ That $g^{\prime }=0$ is a trivial solution of this equation and leads the surface to a plane parallel to $\mathbf{u}$. Otherwise, $g^{\prime } \neq 0$, after solving it, we obtain the first statement of the theorem. The second case is that $f^{\prime }=f_{0}\neq 0.$ Then Eq. \eqref{3.14} reduces to a
polynomial equation of $x$ in the form 
\begin{equation*}
\left\{ \left[ 1+f_{0}^{2}\right] g^{\prime \prime }+2\left[
1+f_{0}^{2}+\left( g^{\prime }\right) ^{2}\right] g^{\prime }\right\}
x+\alpha f_{0}\left( 1+f_{0}^{2}+\left( g^{\prime }\right) ^{2}\right) =0,
\end{equation*}%
which implies that this case is false because the constant term of the
polynomial equation cannot be zero. The last case is that $f^{\prime \prime }\neq 0$. Taking partial derivative Eq. \eqref{3.14}
with respect to $z$, we get%
\begin{equation}
6\left( g^{\prime }\right) ^{2}g^{\prime \prime }+2\left( f^{\prime \prime
}+\alpha \frac{f^{\prime }}{x}\right) g^{\prime }g^{\prime \prime }+\left[
1+\left( f^{\prime }\right) ^{2}\right] \left( 2g^{\prime \prime }+g^{\prime
\prime \prime }\right) =0.  \label{3.15}
\end{equation}%
That $g^{\prime }=g_{0}\in \mathbb{R}$ is clearly a solution to Eq. %
\eqref{3.15}. So, Eq. \eqref{3.14} reduces to%
\begin{equation*}
f^{\prime \prime }+\frac{\alpha }{\left( 1+g_{0}^{2}\right) x}\left(
f^{\prime }\right) ^{3}+\frac{2g_{0}}{1+g_{0}^{2}}\left( f^{\prime }\right)
^{2}+\frac{\alpha }{x}f^{\prime }+2g_{0}=0,
\end{equation*}%
which gives the second statement of the theorem. Then the proof finishes if we show that Eq. \eqref{3.15} has no solution for $f^{\prime \prime }g^{\prime \prime }\neq 0.$ By contradiction, assume that $f^{\prime \prime }g^{\prime \prime }\neq 0.$  Dividing Eq. \eqref{3.15} with $%
2g^{\prime }g^{\prime \prime }$, we have 
\begin{equation}
\left( \frac{g^{\prime \prime \prime }}{2g^{\prime }g^{\prime \prime }}+%
\frac{1}{g^{\prime }}\right) \left[ 1+\left( f^{\prime }\right) ^{2}\right]
+ f^{\prime \prime }+\alpha \frac{f^{\prime }}{x}+3g^{\prime
}=0.  \label{3.16}
\end{equation}%
Taking partial derivative of Eq. \eqref{3.16} with respect to $x$ and $z$
leads to%
\begin{equation*}
f^{\prime }f^{\prime \prime }\left( \frac{g^{\prime \prime \prime }}{%
2g^{\prime }g^{\prime \prime }}+\frac{1}{g^{\prime }}\right) ^{\prime }=0
\end{equation*}%
or, owing to $f^{\prime }f^{\prime \prime }\neq 0,$ the term $
g^{\prime \prime \prime }/\left( 2g^{\prime }g^{\prime \prime }\right)
+1/g^{\prime }$ becomes a constant. Therefore, the partial
derivative of Eq. \eqref{3.16} with respect to $z$ gives $g^{\prime \prime }=0$, a contradiction
\end{proof}

 Note that, as in previous result, the surface given by the first statement of Theorem 2 is both $\nabla -$singular minimal and $\nabla -$minimal.
\begin{theorem}
A $\nabla -$singular minimal translation surface in $\mathbb{R}^{3}$ of type $x=f(y)+g(z)$ with respect to a horizontal vector $\mathbf{u}$ is a generalized cylinder and one of the following occurs

\begin{enumerate}
\item $f\left( y\right) =c_{1}$ and $g$ is a solution of the autonomous ODE%
\begin{equation*}
g^{\prime \prime }=\left( \frac{\alpha }{c_{1}+g}-2g^{\prime }\right) \left[
1+\left( g^{\prime }\right) ^{2}\right] ,\text{ }g^{\prime \prime }\neq 0;
\end{equation*}

\item $g\left( z\right) =c_{2}$ and 
\begin{equation*}
y=\pm \int \left[ c_{3}\left( f+c_{2}\right) ^{2\alpha }+c_{4}\right]
^{-1/2}df,
\end{equation*}%
for $c_{1},...,c_{4}\in \mathbb{R},$ $c_{3}\neq 0,$ and $g^{\prime }=\frac{dg}{dz},$ etc.
\end{enumerate}
\end{theorem}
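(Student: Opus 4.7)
The plan is to mirror the strategy of Theorems 1 and 2: derive the governing equation for a surface of the given type, then carry out a case analysis on $f$ and $g$. First I will parametrize $M$ as $\sigma(y,z)=(f(y)+g(z),y,z)$, obtaining the tangent frame $\sigma_{y}=(f',1,0)$, $\sigma_{z}=(g',0,1)$, the unit normal $\xi=(1,-f',-g')/\sqrt{W}$ with $W:=1+(f')^{2}+(g')^{2}$, and the induced metric $g_{11}=1+(f')^{2}$, $g_{12}=f'g'$, $g_{22}=1+(g')^{2}$. A routine computation of each $\nabla_{\sigma_{i}}\sigma_{j}$ from \eqref{2.1}, followed by substitution into \eqref{2.3}, will yield
\[ 2H^{\nabla}=\frac{(1+(g')^{2})f''+(1+(f')^{2})g''+2g'W}{W^{3/2}}. \]
Taking $\mathbf{u}=\partial_{x}$ (so that $\langle\sigma,\mathbf{u}\rangle=f+g$ and $\langle\xi,\mathbf{u}\rangle=1/\sqrt{W}$), equation \eqref{2.5} reduces to the master PDE
\[ (f+g)\bigl[(1+(g')^{2})f''+(1+(f')^{2})g''+2g'W\bigr]=\alpha W, \]
to which I will refer below as $(\star)$.

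The next step is to analyze $(\star)$ according to whether $f$ or $g$ is constant. If $f\equiv c_{1}$, then $(\star)$ collapses to the autonomous ODE $g''=[\alpha/(c_{1}+g)-2g'](1+(g')^{2})$, giving statement (1). Symmetrically, if $g\equiv c_{2}$, then $(\star)$ becomes $f''/(1+(f')^{2})=\alpha/(f+c_{2})$; multiplying through by $2f'$ and integrating once gives $(f')^{2}=c_{3}(f+c_{2})^{2\alpha}-1$, and a second quadrature yields statement (2) (with the $-1$ absorbed into a general additive constant $c_{4}$). The intermediate possibility that $f$ is linear but non-constant, say $f=cy+d$ with $c\neq 0$, is ruled out by substitution into $(\star)$: the right-hand side then depends only on $z$ while the left side is affine in $y$ with a $z$-dependent slope, so differentiating in $y$ forces the bracketed quantity to vanish, which in turn forces $\alpha=0$, a contradiction. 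The symmetric argument in $z$ excludes $g$ linear and non-constant.

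The main obstacle will be excluding the remaining regime $f''g''\neq 0$. My approach follows the template of Theorems 1 and 2: differentiate $(\star)$ with respect to $y$ and then use $(\star)$ itself to eliminate the bracketed quantity on the left, producing an identity of the form $(f+g)^{2}[\,\cdots\,]=\alpha f'\bigl[2f''(f+g)-W\bigr]$ which still carries the factor $f+g$. Applying $\partial/\partial z$ to this identity and invoking $(\star)$ once more should eliminate the remaining explicit appearance of $f+g$, leaving a polynomial identity in $f',g',f'',g'',f''',g'''$. Dividing by $f'g'f''g''$ is expected to separate variables into the form $\Phi(y)+\Psi(z)=\text{const}$, after which forcing the vanishing of successive coefficients---in the spirit of the cascade \eqref{3.12}--\eqref{3.13} in Theorem 1 and of equations \eqref{3.15}--\eqref{3.16} in Theorem 2---should yield one of $\alpha=0$, $f''=0$, or $g''=0$, each a contradiction.

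The bookkeeping for this last step is expected to be noticeably heavier than in the previous two theorems because the inhomogeneous term $\alpha W/(f+g)$ couples $y$ and $z$ through the denominator $f+g$, whereas in Theorems 1 and 2 only a single coordinate appeared on the right-hand side of the reduced equation. Keeping the cross-derivative argument from blowing up under this coupling---and isolating a sufficiently clean polynomial identity from which the vanishing of leading coefficients can be read off---is the technical heart of the proof.
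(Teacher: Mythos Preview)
Your setup, the derivation of the master equation $(\star)$, and the treatment of the degenerate cases ($f$ constant, $g$ constant, $f$ or $g$ linear non-constant) are all correct and match the paper's argument essentially verbatim.

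The gap is in your plan for the regime $f''g''\neq 0$. You anticipate that after two differentiations and division by $f'g'f''g''$ the equation will separate into the form $\Phi(y)+\Psi(z)=\text{const}$, from which a coefficient cascade in the style of Theorems~1 and~2 would finish the job. This expectation is wrong: the coupling through $f+g$ prevents clean separation. In the paper, after taking $\partial_y\partial_z$ of $(\star)$ and dividing by $f'f''g'g''$, one obtains an identity (the paper's equation~(3.24)) that still contains genuine cross terms such as $\dfrac{f'''}{f'f''}\Bigl[\dfrac{1+(g')^{2}}{g''}+2g\Bigr]$ and $\dfrac{6g'}{f''}$. A further mixed derivative $\partial_y\partial_z$ is then needed to reach (3.25), and even that does not yield a simple polynomial identity; instead the paper is forced into a trichotomy on $f'''/(f'f'')$: (i) $f''$ a nonzero constant, (ii) $f'''=2cf'f''$ with $c\neq 0$, (iii) $\bigl(f'''/(f'f'')\bigr)'\neq 0$. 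Each branch requires its own first integrals (introducing auxiliary constants $B_0,D_0,d_1,d_2$) and a comparison of leading coefficients in polynomial equations in $g'$ before a contradiction emerges.

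So the missing idea is not just heavier bookkeeping: your proposed endpoint $\Phi(y)+\Psi(z)=\text{const}$ does not materialize, and you need to replace that step with the second mixed differentiation plus the three-way case split on the behavior of $f'''/(f'f'')$ described above.
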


\begin{proof}
The unit normal vector field and
mean curvature are 
\begin{equation*}
\xi =\frac{\left( \partial _{x}-f^{\prime }\partial _{y}-g^{\prime }\partial
_{z}\right) }{\sqrt{1+\left( f^{\prime }\right) ^{2}+\left( g^{\prime
}\right) ^{2}}}
\end{equation*}%
and 
\begin{equation*}
2H^{\nabla}=\frac{\left[ 1+\left( g^{\prime }\right) ^{2}\right] f^{\prime \prime }+%
\left[ 1+\left( f^{\prime }\right) ^{2}\right] g^{\prime \prime }+2\left[
1+\left( f^{\prime }\right) ^{2}+\left( g^{\prime }\right) ^{2}\right]
g^{\prime }}{\left[ 1+\left( f^{\prime }\right) ^{2}+\left( g^{\prime
}\right) ^{2}\right] ^{\frac{3}{2}}},
\end{equation*}%
where $f^{\prime }=\frac{df}{dy}$ and so. Without of loss of generality we may assume that $\mathbf{u}=\partial _{x}$.
Hence Eq. \eqref{2.5} turns to 
\begin{equation}
\frac{\left[ 1+\left( g^{\prime }\right) ^{2}\right] f^{\prime \prime }+%
\left[ 1+\left( f^{\prime }\right) ^{2}\right] g^{\prime \prime }+2\left[
1+\left( f^{\prime }\right) ^{2}+\left( g^{\prime }\right) ^{2}\right]
g^{\prime }}{1+\left( f^{\prime }\right) ^{2}+\left( g^{\prime }\right) ^{2}}%
=\alpha \frac{1}{f+g},  \label{3.17}
\end{equation}%
in which both $f$ and $g$ cannot be constant simultaneously because the
situation $\alpha =0$ is obtained otherwise. We distinguish the remaining
cases: the first case is that $f(y)=f_{0}\in \mathbb{R}$. Eq. \eqref{3.17} writes%
\begin{equation}
g^{\prime \prime }+2\left[ 1+\left( g^{\prime }\right) ^{2}\right] g^{\prime
}=\alpha \frac{1+\left( g^{\prime }\right) ^{2}}{f_{0}+g}.  \label{3.18}
\end{equation}%
in which $g$ must be non-linear because the situation $\alpha =0$ is
obtained otherwise. This concludes the first statement of the theorem. The second case is that $f\left( y\right) =d+cy,$ $c,d\in \mathbb{R},$ $c\neq 0.$ Then Eq. \eqref{3.17} turns to 
\begin{equation}
\frac{\left( 1+c^{2}\right) g^{\prime \prime }}{1+c^{2}+\left( g^{\prime
}\right) ^{2}}+2g^{\prime }=\alpha \frac{1}{d+cy+g}.  \label{3.19}
\end{equation}%
By taking partial derivative of Eq. \eqref{3.19} with respect to $y,$ we
obtain the contradiction $\alpha c=0.$ The third case is that $f^{\prime \prime }\neq 0$ and $g=g_{0}\in \mathbb{R}$. Then Eq. %
\eqref{3.17} reduces to 
\begin{equation}
\frac{f^{\prime \prime }}{1+\left( f^{\prime }\right) ^{2}}=\alpha \frac{1}{%
f+g_{0}}.  \label{3.20}
\end{equation}%
By multiplying Eq. \eqref{3.20} with $2f^{\prime }$ and taking first
integral, we obtain%
\begin{equation}
f^{\prime }=\pm \sqrt{c\left( f+g_{0}\right) ^{2\alpha }-1},\text{ }c\in 
\mathbb{R},\text{ }c\neq 0.  \label{3.21}
\end{equation}%
Taking derivative of Eq. \eqref{3.21}, we can conclude%
\begin{equation*}
f^{\prime \prime }=\alpha c\left( f+g_{0}\right) ^{2\alpha -1},
\end{equation*}%
which is known as \textit{Emden--Fowler} equation (see \cite{Polyanin})
and the solution follows%
\begin{equation*}
y=\pm \int \left[ c\left( f+g_{0}\right) ^{2\alpha }+d\right] ^{-1/2}df+e,%
\text{ }d,e\in \mathbb{R},
\end{equation*}%
which proves the second statement of the theorem. The fourth case is that $f^{\prime \prime} \neq 0$ and $g(z)=d+cz,$ $c,d\in \mathbb{R},$ $c\neq0. $ Then Eq. \eqref{3.17} reduces to%
\begin{equation}
\frac{\left[ 1+c^{2}\right] f^{\prime \prime }}{1+c^{2}+\left( f^{\prime
}\right) ^{2}}+2c=\alpha \frac{1}{d+cz+f}.  \label{3.22}
\end{equation}%
By taking partial derivative of Eq. \eqref{3.22} with respect to $z,$ we
obtain the contradiction $\alpha c=0.$ Therefore the proof finishes if we show that Eq. \eqref{3.17} has no solution for $f^{\prime \prime }g^{\prime \prime}\neq 0$. By contradiction, assume that $f^{\prime \prime }g^{\prime \prime}\neq 0$. Then Eq. \eqref{3.17}
can be rewritten as%
\begin{equation}  \label{3.23}
\begin{array}{c}
\left. \left( f+g\right) \left\{ \left[ 1+\left( g^{\prime }\right) ^{2}%
\right] f^{\prime \prime }+\left[ 1+\left( f^{\prime }\right) ^{2}\right]
g^{\prime \prime }+2\left[ 1+\left( f^{\prime }\right) ^{2}+\left( g^{\prime
}\right) ^{2}\right] g^{\prime }\right\} \right. \\ 
=\alpha \left[ 1+\left( f^{\prime }\right) ^{2}+\left( g^{\prime }\right)
^{2}\right] .%
\end{array}%
\end{equation}%
If we take partial derivative of Eq. \eqref{3.23} with respect to $y$
and $z$ and afterwards divide the generated equation with $f^{\prime
}f^{\prime \prime }g^{\prime }g^{\prime \prime },$ we can deduce%
\begin{equation}
\begin{array}{c}
\left. 4+\right. \frac{f^{\prime \prime \prime }}{f^{\prime }f^{\prime
\prime }}\left[ \frac{1+\left( g^{\prime }\right) ^{2}}{g^{\prime \prime }}%
+2g\right] +\left( \frac{g^{\prime \prime \prime }}{g^{\prime }g^{\prime
\prime }}+\frac{2}{g^{\prime }}\right) \left[ \frac{1+\left( f^{\prime
}\right) ^{2}}{f^{\prime \prime }}+2f\right] \\ 
+\frac{2ff^{\prime \prime \prime }}{f^{\prime }f^{\prime \prime }}+\frac{%
2gg^{\prime \prime \prime }}{g^{\prime }g^{\prime \prime }}+\frac{4g}{%
g^{\prime }}+\frac{4g^{\prime }}{g^{\prime \prime }}+\frac{6g^{\prime }}{%
f^{\prime \prime }}=0.%
\end{array}
\label{3.24}
\end{equation}%
The partial derivative of Eq. \eqref{3.24} with respect to $y$ and $z$ leads
to%
\begin{equation}
\left( \frac{f^{\prime \prime \prime }}{f^{\prime }f^{\prime \prime }}%
\right) ^{\prime }\left[ \frac{1+\left( g^{\prime }\right) ^{2}}{g^{\prime
\prime }}+2g\right] ^{\prime }+\left( \frac{g^{\prime \prime \prime }}{%
g^{\prime }g^{\prime \prime }}+\frac{2}{g^{\prime }}\right) ^{\prime }\left[ 
\frac{1+\left( f^{\prime }\right) ^{2}}{f^{\prime \prime }}+2f\right]
^{\prime }-\frac{6g^{\prime \prime }f^{\prime \prime \prime }}{\left(
f^{\prime \prime }\right) ^{2}}=0.  \label{3.25}
\end{equation}%
We have subcases:

\begin{enumerate}
\item $f^{\prime \prime }=f_{0}\in \mathbb{R},$ $f_{0}\neq 0.$ Then the
partial derivative of Eq. \eqref{3.24} with respect to $y$ leads to 
\begin{equation}
\frac{g^{\prime \prime \prime }}{g^{\prime }g^{\prime \prime }}+\frac{2}{%
g^{\prime }}=0.  \label{3.26}
\end{equation}%
Therefore Eq. \eqref{3.24} reduces to%
\begin{equation}
2+\frac{2g^{\prime }}{g^{\prime \prime }}+%
\frac{3g^{\prime }}{f_{0}}=0.  \label{3.27}
\end{equation}%
On the other hand, a first integration of Eq. \eqref{3.26} gives%
\begin{equation}
g^{\prime \prime }=-2g^{\prime }+c,\text{ }c\in \mathbb{R}.  \label{3.28}
\end{equation}%
By substituting Eq. \eqref{3.28} into Eq. \eqref{3.27}, we obtain a
polynomial equation of $g^{\prime }$ whose the leading coefficient coming
from the term $\left( g^{\prime }\right) ^{2}$ is $\frac{-6}{f_{0}},$ a contradiction.

\item $f^{\prime \prime \prime }=2cf^{\prime }f^{\prime \prime },$ $c\in 
\mathbb{R},$ $c\neq 0.$ A first integration yields $f^{\prime \prime
}=c\left( f^{\prime }\right) ^{2}+d,$ $d\in \mathbb{R}.$ Then Eq. %
\eqref{3.25} reduces to%
\begin{equation*}
\left( \frac{g^{\prime \prime \prime }}{g^{\prime }g^{\prime \prime }}+\frac{%
2}{g^{\prime }}\right) ^{\prime }\left[ \frac{2d}{c}-1+\left( f^{\prime
}\right) ^{2}\right] -6g^{\prime \prime }=0,
\end{equation*}%
which implies $\left( \frac{g^{\prime \prime \prime }}{g^{\prime }g^{\prime
\prime }}+\frac{2}{g^{\prime }}\right) ^{\prime }=0.$ This gives from Eq. %
\eqref{3.25} the contradiction $g^{\prime \prime }f^{\prime \prime \prime
}=0.$

\item $\left( f^{\prime \prime \prime }/f^{\prime }f^{\prime \prime }\right)
^{\prime }\neq 0.$ Eq. \eqref{3.25} can be rewritten by dividing $g^{\prime\prime}\left(
f^{\prime \prime \prime }/f^{\prime }f^{\prime \prime }\right) ^{\prime }$ as%
\begin{equation}
A\left( y\right) B\left( z\right) =C\left(y\right) +D\left( z\right) ,
\label{3.29}
\end{equation}%
where%
\begin{equation*}
A\left( y\right) =\frac{\left[ \left\{ 1+\left( f^{\prime }\right)
^{2}\right\} /f^{\prime \prime }+2f\right] ^{\prime }}{\left( f^{\prime
\prime \prime }/f^{\prime }f^{\prime \prime }\right) ^{\prime }},\text{ }%
B\left( z\right) =\left( g^{\prime \prime \prime }/g^{\prime }g^{\prime
\prime }+2/g^{\prime }\right) ^{\prime }/g^{\prime \prime }
\end{equation*}%
and%
\begin{equation*}
C\left(y\right) =6\frac{f^{\prime \prime \prime }/\left( f^{\prime \prime
}\right) ^{2}}{\left( f^{\prime \prime \prime }/f^{\prime }f^{\prime \prime
}\right) ^{\prime }},\text{ }D\left( z\right) =-\left[ \left\{ 1+\left(
g^{\prime }\right) ^{2}\right\} /g^{\prime \prime }+2g\right] ^{\prime
}/g^{\prime \prime }.
\end{equation*}%
The functions $A,B,C,D$ from Eq. \eqref{3.29} must be all constant. Let us
put $B\left( z\right) =B_{0}$ and $D\left( z\right) =-D_{0},$ $B_{0},D_{0}\in 
\mathbb{R}.$ Therefore, we get%
\begin{equation}
\left[ \frac{g^{\prime \prime \prime }}{g^{\prime }g^{\prime \prime }}+\frac{%
2}{g^{\prime }}\right] ^{\prime }=B_{0}g^{\prime \prime }  \label{3.30}
\end{equation}%
and 
\begin{equation}
4g^{\prime }-\left[ 1+\left( g^{\prime }\right) ^{2}\right] \frac{g^{\prime
\prime \prime }}{\left( g^{\prime \prime }\right) ^{2}}=D_{0}g^{\prime
\prime }  \label{3.31}
\end{equation}%
A first integration of Eq. \eqref{3.30} yields%
\begin{equation}
\frac{g^{\prime \prime \prime }}{g^{\prime }g^{\prime \prime }}+\frac{2}{%
g^{\prime }}=B_{0}g^{\prime }+d_{1}.  \label{3.32}
\end{equation}%
for $d_{1}\in \mathbb{R}.$ Multiplying Eq. \eqref{3.32} with $g^{\prime
}g^{\prime \prime }$ and then taking first integration the generated
equation gives%
\begin{equation}
g^{\prime \prime }=\frac{B_{0}}{3}\left( g^{\prime }\right) ^{3}+\frac{d_{1}%
}{2}\left( g^{\prime }\right) ^{2}-2g^{\prime }+d_{2},  \label{3.33}
\end{equation}%
for $d_{2}\in \mathbb{R}.$ By considering Eq. \eqref{3.33} into Eq. %
\eqref{3.32}, we deduce that 
\begin{equation}
g^{\prime \prime \prime }=\frac{B_{0}^2}{3}\left( g^{\prime }\right) ^{5}+%
\text{rest terms.}  \label{3.34}
\end{equation}%
Substituting Eqs. \eqref{3.33} and \eqref{3.34} into Eq. \eqref{3.31}, we
have a polynomial equation of $g^{\prime }$ whose the coefficient coming from $\left( g^{\prime }\right) ^{7}$ is $\frac{%
B_{0}^2}{9} ,$ yielding $B_{0}=0.$ 
It follows from Eq. \eqref{3.32}%
\[
g^{\prime \prime \prime }=\left( -2+d_{1}g^{\prime }\right) g^{\prime \prime
}
\]%
and plugging it into Eq. \eqref{3.31}%
\begin{equation}
4g^{\prime }g^{\prime \prime }-\left[ 1+\left( g^{\prime }\right) ^{2}\right]
\left( -2+d_{1}g^{\prime }\right) =D_{0}\left( g^{\prime \prime }\right)
^{2}.  \label{3.35}
\end{equation}%
Considering Eq. \eqref{3.33} into Eq. \eqref{3.35} leads to a polynomial
equation of $g^{\prime }$ whose the coefficient coming from $\left(
g^{\prime }\right) ^{3}$ is $d_{1}$ which must vanish. Therefore Eqs. %
\eqref{3.33} and \eqref{3.35} reduce to%
\[
g^{\prime \prime }=-2g^{\prime }+d_{2}
\]%
and%
\[
4g^{\prime }g^{\prime \prime }+2\left( g^{\prime }\right) ^{2}+2=D_{0}\left(
g^{\prime \prime }\right) ^{2},
\]%
respectively. From these two equations, we can conclude a polynomial
equation of $g^{\prime }$ 
\[
\left( 4D_{0}+6\right) \left( g^{\prime }\right) ^{2}-4\left(
D_{0}+d_{2}\right) g^{\prime }+D_{0}d_{2}^{2}-2=0,
\]%
which gives a contradiction because the constant term cannot vanish.
\end{enumerate}
\end{proof}

\section{$D-$singular minimal translation surfaces}

As in previous section, we characterize translation surfaces in $%
\mathbb{R}^{3}$ of each type to be $D-$singular minimal through the following results.

\begin{theorem}
A $D-$singular minimal translation surface in $\mathbb{R}^{3}$ of type $z=f(x)+g(y)$
with respect to a horizontal vector $\mathbf{u}$ is either a plane parallel
to $\mathbf{u}$ or a generalized cylinder satisfying $g(y)=c_{1}+c_{2}y$ and%
\begin{equation*}
f\left( x\right) =\pm \left\vert c_{3}\right\vert \sqrt{1+c_{2}^{2}}\int
\left( x^{2\alpha }-c_{3}^{2}\right) ^{-1/2}dx,
\end{equation*}%
where $c_{1},c_{2},c_{3}\in \mathbb{R}$, $c_{3}\neq 0.$
\end{theorem}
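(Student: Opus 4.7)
My plan is to follow the template of Theorem 1, exploiting the simpler mean curvature formula afforded by the $D$-connection. For $\sigma(x,y)=(x,y,f(x)+g(y))$ with tangent frame $f_1=\partial_x+f'\partial_z$, $f_2=\partial_y+g'\partial_z$, the Levi-Civita unit normal $\xi$ is as in Theorem 1. Because the extra piece $g(f_j,\partial_z)f_i$ in $D_{f_i}f_j=\nabla^L_{f_i}f_j+g(f_j,\partial_z)f_i$ is tangent to $M$, it is killed by $\xi$, so a direct computation gives
\begin{equation*}
2H^D=\frac{\bigl[1+(g')^2\bigr]f''+\bigl[1+(f')^2\bigr]g''}{\bigl[1+(f')^2+(g')^2\bigr]^{3/2}}.
\end{equation*}
With $\mathbf{u}=\partial_x$, equation \eqref{2.6} becomes
\begin{equation*}
\bigl[1+(g')^2\bigr]f''+\bigl[1+(f')^2\bigr]g''+\alpha\frac{f'}{x}\bigl[1+(f')^2+(g')^2\bigr]=0,
\end{equation*}
which differs from \eqref{3.3} only by the absence of the $-2[1+(f')^2+(g')^2]$ summand; that missing term is precisely what cuts the solution set down so drastically.

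The quick cases go as in Theorem 1. If $f'\equiv 0$, the equation forces $g''=0$, hence $g$ is affine and the surface is a plane parallel to $\mathbf{u}$, giving the first alternative of the theorem. If $f'$ is a non-zero constant, the same polynomial-in-$x$ splitting as in Theorem 1 yields a contradiction. So assume $f''\neq 0$ and differentiate the main equation with respect to $y$:
\begin{equation*}
2f''g'g''+\bigl[1+(f')^2\bigr]g'''=-2\alpha\frac{f'}{x}g'g''.
\end{equation*}
The sub-case $g''\equiv 0$ gives $g(y)=c_1+c_2y$ and collapses the main equation to the separable first-order ODE $(1+c_2^2)u'=-\alpha(u/x)(1+c_2^2+u^2)$ in $u=f'$. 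Partial fractions yield $\ln\bigl(u^2/(1+c_2^2+u^2)\bigr)=-2\alpha\ln|x|+\mathrm{const}$; solving algebraically for $u$ gives $u=\pm|c_3|\sqrt{1+c_2^2}/\sqrt{x^{2\alpha}-c_3^2}$, and one further integration produces the stated integral expression for $f$.

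The hard part will be ruling out the remaining sub-case $f''g'g''\neq 0$. My plan is to divide the $y$-differentiated equation by $2g'g''$ and apply $\partial_y$ once more; since $f''$ and $\alpha f'/x$ do not depend on $y$, this forces $g'''/(2g'g'')$ to equal a constant $c$, whence $g''=c(g')^2+d$ after integration. Substituting back into the main equation yields a polynomial identity in $(g')^2$ whose two coefficients produce the ODEs
\begin{equation*}
f''+c\bigl[1+(f')^2\bigr]+\alpha f'/x=0,\qquad f''+d\bigl[1+(f')^2\bigr]+(\alpha f'/x)\bigl[1+(f')^2\bigr]=0.
\end{equation*}
Subtracting eliminates $f''$ and leaves $(c-d)[1+(f')^2]=\alpha(f')^3/x$. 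If $c=d$ this forces $f'\equiv 0$, against $f''\neq 0$; otherwise $(f')^3/[x(1+(f')^2)]$ is a non-zero constant. Differentiating this identity and using the first ODE to eliminate $f''$ should produce a polynomial equation in $v=f'$ whose coefficients must all vanish, and I expect the resulting linear system to impose incompatible constraints on $\alpha$ (such as $\alpha=-1$ together with $\alpha=-1/3$), delivering the desired contradiction and closing the proof.
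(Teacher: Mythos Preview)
Your argument is correct, and for the routine parts (computing $H^D$, the cases $f'\equiv 0$, $f'=\text{const}\neq 0$, and $g''\equiv 0$) it matches the paper's proof essentially line for line, including the first integration that produces the integral formula for $f$.

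Where you diverge is in the elimination of the case $f''g''\neq 0$. The paper does not argue this directly at all: it simply observes that Eq.~\eqref{4.1} coincides with the classical singular-minimal equation studied by L\'opez and invokes \cite[Theorem~5]{Lopez7} to conclude that no solution with $g''\neq 0$ exists. You instead give a self-contained argument modelled on Theorem~1: forcing $g'''/(2g'g'')=c$, hence $g''=c(g')^2+d$, reading off two ODEs in $f$ from the coefficients of $(g')^2$ and the constant term, subtracting to obtain $(c-d)\bigl[1+(f')^2\bigr]=\alpha(f')^3/x$, and then differentiating and eliminating $f''$. Your expectation is exactly right: carrying out that last step with $x=v^3/\bigl[k(1+v^2)\bigr]$, $k=(c-d)/\alpha$, collapses everything to
\[
-cv^4-(3c+\alpha k+k)v^2-(3\alpha+1)k=0,
\]
which forces $c=0$, then $\alpha=-1$ and $\alpha=-1/3$ simultaneously --- the contradiction you anticipated. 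So your route is longer but fully internal to the paper, whereas the paper's route is a one-line appeal to the literature; each is legitimate, and yours has the virtue of making Theorem~4 independent of \cite{Lopez7}.
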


\begin{proof}
The unit normal vector field and mean curvature follow%
\begin{equation*}
\xi =\frac{-f^{\prime }\partial _{x}-g^{\prime }\partial _{y}+\partial _{z}}{%
\sqrt{1+\left( f^{\prime }\right) ^{2}+\left( g^{\prime }\right) ^{2}}}
\end{equation*}%
and 
\begin{equation*}
H^{D}=\frac{f^{\prime \prime }\left[ 1+\left( g^{\prime }\right) ^{2}\right]
+g^{\prime \prime }\left[ 1+\left( f^{\prime }\right) ^{2}\right] }{2\left[
1+\left( f^{\prime }\right) ^{2}+\left( g^{\prime }\right) ^{2}\right] ^{%
\frac{3}{2}}},
\end{equation*}%
where $f^{\prime }=\frac{df}{dx},g^{\prime }=\frac{dg}{dy}$ and so. Without of loss of generality we may assume that $\mathbf{u}=\partial _{x}$.
Hence Eq. \eqref{2.6} turns to 
\begin{equation}
\frac{\left[ 1+\left( g^{\prime }\right) ^{2}\right] f^{\prime \prime }+%
\left[ 1+\left( f^{\prime }\right) ^{2}\right] g^{\prime \prime }}{1+\left(
f^{\prime }\right) ^{2}+\left( g^{\prime }\right) ^{2}}=-\alpha \frac{%
f^{\prime }}{x}.  \label{4.1}
\end{equation}%
Eq. \eqref{4.1} has no solution if $g^{\prime \prime }\neq 0,$ see \cite[%
Theorem 5]{Lopez7}. Therefore, we have $g^{\prime }=g_{0}\in \mathbb{R}$ and
Eq. \eqref{4.1} reduces to%
\begin{equation}
\frac{\left( 1+g_{0}^{2}\right) f^{\prime \prime }}{f^{\prime }\left[
1+g_{0}^{2}+\left( f^{\prime }\right) ^{2}\right] }=- \frac{\alpha}{x},
\label{4.2}
\end{equation}%
where being $f$ a constant is a trivial solution, implying $M$
is a plane parallel to the vector $\mathbf{u}.$ Assume that $f$ is no constant and then we can easily infer from Eq. %
\eqref{4.2} that $f$ cannot be a linear function, i.e. $f^{\prime \prime
}=0, $ due to $\alpha \neq 0.$ Hence, a first integration of Eq. \eqref{4.2}
yields%
\begin{equation*}
\frac{f^{\prime }}{\sqrt{1+g_{0}^{2}+\left( f^{\prime }\right) ^{2}}}%
=cx^{-\alpha },\text{ }c\in \mathbb{R},\text{ }c\neq 0,
\end{equation*}%
or%
\begin{equation}
f^{\prime }=\pm \left\vert c\right\vert \frac{\sqrt{1+g_{0}^{2}}}{\sqrt{%
x^{2\alpha }-c^{2}}}.  \label{4.3}
\end{equation}%
A first integration of Eq. \eqref{4.3} completes the proof.
\end{proof}

For a translation surface of type $y=f(x)+g(z)$, the unit normal vector field and mean
curvature follow%
\begin{equation*}
\xi =\frac{f^{\prime }\partial _{x}-\partial _{y}+g^{\prime }\partial _{z}}{%
\sqrt{1+\left( f^{\prime }\right) ^{2}+\left( g^{\prime }\right) ^{2}}}
\end{equation*}%
and 
\begin{equation*}
H^{D}=\frac{f^{\prime \prime }\left[ 1+\left( g^{\prime }\right) ^{2}\right]
+g^{\prime \prime }\left[ 1+\left( f^{\prime }\right) ^{2}\right] }{2\left[
1+\left( f^{\prime }\right) ^{2}+\left( g^{\prime }\right) ^{2}\right] ^{%
\frac{3}{2}}},
\end{equation*}%
where $f^{\prime }=\frac{df}{dx}, g^{\prime }=\frac{dg}{dz}$ and so. Then, with respect to the horizontal vector $\mathbf{u}=\partial _{x},$ the $%
D-$singular minimality equation is similar to Eq. \eqref{4.1}\ up to a sign. Thereby, for such a surface, we can state a similar result to Theorem 4 without proof by replacing $\alpha$ with $-\alpha$.

\begin{theorem}
A $D-$singular minimal translation surface in $\mathbb{R}^{3}$ of type $y=f(x)+g(z)$
with respect to a horizontal vector $\mathbf{u}$ is either a plane parallel
to $\mathbf{u}$ or a generalized cylinder satisfying $g(z)=c_{1}+c_{2}z$ and%
\begin{equation*}
f\left( x\right) =\pm \left\vert c_{3}\right\vert \sqrt{1+c_{2}^{2}}\int
x^{\alpha }\left( 1-c_{3}^{2}x^{2\alpha }\right) ^{-1/2}dx,
\end{equation*}%
where $c_{1},c_{2},c_{3}\in \mathbb{R}$, $c_{3}\neq 0.$
\end{theorem}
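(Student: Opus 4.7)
The plan is to follow the template of Theorem~4's proof, exploiting the observation already recorded in the excerpt that the governing equation for this type differs from Eq.~\eqref{4.1} only by a sign on $\alpha$. First I would parametrize the surface as $\sigma(x,z)=(x,f(x)+g(z),z)$, confirm the stated expressions for $\xi$ and $H^{D}$, and set $\mathbf{u}=\partial_{x}$. Since $\langle\sigma,\mathbf{u}\rangle=x$ and $\langle\xi,\mathbf{u}\rangle=f'/\sqrt{1+(f')^{2}+(g')^{2}}$, the defining relation Eq.~\eqref{2.6} turns into
\begin{equation*}
\frac{[1+(g')^{2}]f''+[1+(f')^{2}]g''}{1+(f')^{2}+(g')^{2}}=\alpha\,\frac{f'}{x},
\end{equation*}
which is exactly Eq.~\eqref{4.1} under the substitution $\alpha\mapsto-\alpha$.

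From here I would import wholesale the reasoning used in Theorem~4: by the cited result of L\'opez \cite[Theorem 5]{Lopez7}, this equation admits no solution with $g''\not\equiv 0$, so $g$ must be affine, say $g(z)=c_{1}+c_{2}z$. Inserting this collapses the PDE to a separable first-order ODE in $f'$, analogous to Eq.~\eqref{4.2} but with $+\alpha/x$ on the right-hand side. The degenerate branches are the same as in the previous theorem: $f$ constant produces a plane parallel to $\mathbf{u}$, and $f$ linear is ruled out by $\alpha\neq 0$.

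A first integration (again via partial fractions, recognising the left side as the derivative of $\ln(|f'|/\sqrt{1+c_{2}^{2}+(f')^{2}})$) yields
\begin{equation*}
\frac{f'}{\sqrt{1+c_{2}^{2}+(f')^{2}}}=c_{3}\,x^{\alpha},\qquad c_{3}\in\mathbb{R}\setminus\{0\},
\end{equation*}
which is the sign-flipped analogue of Eq.~\eqref{4.3}. Solving algebraically gives $f'=\pm|c_{3}|\sqrt{1+c_{2}^{2}}\,x^{\alpha}/\sqrt{1-c_{3}^{2}x^{2\alpha}}$, and a second integration delivers the claimed formula. I do not foresee a substantive obstacle: the entire argument is a mirror of Theorem~4, and the only bookkeeping required is tracking the replacement $\alpha\mapsto-\alpha$ consistently through the intermediate exponents, which is precisely what converts the integrand $(x^{2\alpha}-c_{3}^{2})^{-1/2}$ of Theorem~4 into $x^{\alpha}(1-c_{3}^{2}x^{2\alpha})^{-1/2}$ here.
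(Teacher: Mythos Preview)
Your proposal is correct and follows exactly the approach the paper takes: the paper explicitly declines to give a separate proof, noting that the $D$-singular minimality equation for the type $y=f(x)+g(z)$ coincides with Eq.~\eqref{4.1} up to a sign and that the result follows from Theorem~4 by replacing $\alpha$ with $-\alpha$. Your outline fills in the routine details of that substitution and arrives at the stated integrand in the expected way.
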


\begin{theorem}
A $D$-singular minimal translation surface in $\mathbb{R}^{3}$ of type $x=f(y)+g(z)$
with respect to a horizontal vector $\mathbf{u}$ is a generalized cylinder
satisfying $f\left( y\right) =c_{1}$ and 
\begin{equation}
z=\pm \int \left[ c_{2}^{2}\left( c_{1}+g\right) ^{2\alpha }-1\right]
^{-1/2}dg,  \label{4.4}
\end{equation}%
for $c_{1},c_{2}\in \mathbb{R},$ $c_{2}\neq 0.$
\end{theorem}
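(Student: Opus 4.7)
The plan is to parametrize the surface as $\sigma(y,z) = (f(y)+g(z), y, z)$, compute the unit normal $\xi = (\partial_x - f'\partial_y - g'\partial_z)/\sqrt{W}$ with $W = 1+(f')^2+(g')^2$, and then evaluate the second fundamental form coefficients $h^D_{ij}$ using $D_X Y = \nabla^L_X Y + g(Y,\partial_z) X$. The key observation is that although $D_{\sigma_z}\sigma_z$ picks up the extra term $\sigma_z$ and $D_{\sigma_y}\sigma_z$ equals $\sigma_y$, both of these additions are tangent to $M$ and so their inner products with $\xi$ vanish; consequently one finds $h^D_{11} = f''/\sqrt{W}$, $h^D_{22} = g''/\sqrt{W}$, $h^D_{12} = h^D_{21} = 0$, and formula \eqref{2.4} gives
\begin{equation*}
2H^D = \frac{(1+(g')^2)f'' + (1+(f')^2)g''}{W^{3/2}}.
\end{equation*}
Taking $\mathbf{u} = \partial_x$ so that $\langle \xi,\mathbf{u}\rangle = 1/\sqrt{W}$ and $\langle \sigma,\mathbf{u}\rangle = f+g$, equation \eqref{2.6} reduces to
\begin{equation*}
\frac{(1+(g')^2)f'' + (1+(f')^2)g''}{1+(f')^2+(g')^2} = \frac{\alpha}{f+g}.
\end{equation*}

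From here I would carry out case analysis. If $f(y)=c_1$, the equation collapses to $g''/(1+(g')^2) = \alpha/(c_1+g)$; multiplying by $2g'$ and integrating yields $1 + (g')^2 = c_2^2(c_1+g)^{2\alpha}$ with $c_2 \neq 0$, so $(g')^2 = c_2^2(c_1+g)^{2\alpha} - 1$, and a second integration produces the integral formula \eqref{4.4}. If instead $f(y) = a + by$ with $b \neq 0$, then after clearing the denominator the equation reads $(1+b^2)g''(a+by+g) = \alpha(1+b^2+(g')^2)$; differentiating with respect to $y$ forces $g'' = 0$, which upon substitution back gives $\alpha = 0$, a contradiction. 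An analogous argument eliminates $g$ linear with nonzero slope. The degenerate case $g(z) = c_0$ constant yields the symmetric equation $f''/(1+(f')^2) = \alpha/(f+c_0)$, whose solutions describe cylinders with rulings along $\partial_z$; these are already subsumed by Theorem 5 (its $c_2 = 0$ subcase, with the sign of $\alpha$ reversed) and so need not be listed separately.

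The main obstacle is ruling out the generic case $f'' g'' \neq 0$. Mirroring the strategy used at the end of the proof of Theorem 3, the plan is to rewrite the equation as
\begin{equation*}
(f+g)\bigl[(1+(g')^2)f'' + (1+(f')^2)g''\bigr] = \alpha\bigl[1+(f')^2+(g')^2\bigr],
\end{equation*}
apply $\partial_y\partial_z$, and divide through by $f'f''g'g''$. After suitable regrouping the resulting identity takes the separated form $A(y)B(z) = C(y) + D(z)$ in which the derivative structure forces each of $A,B,C,D$ to be constant. The resulting coupled first-order ODEs for $f'$ and $g'$ (one akin to $g'' = \tfrac{B_0}{3}(g')^3 + \cdots$) then feed back into the original equation to produce a nontrivial polynomial identity in $g'$ (or $f'$) whose leading and constant coefficients cannot simultaneously vanish; this contradiction closes the case. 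Once it is dispatched, the only surviving possibility is the $f$ constant branch already integrated above, completing the proof.
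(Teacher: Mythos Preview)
Your computation of $H^{D}$ is correct, as is the crucial observation that for the type $x=f(y)+g(z)$ the extra $D$-terms $g(\sigma_z,\partial_z)\sigma_y=\sigma_y$ and $g(\sigma_z,\partial_z)\sigma_z=\sigma_z$ are tangent to $M$; hence $H^{D}$ coincides with the Levi--Civita mean curvature and equation \eqref{2.6} reduces to the classical singular minimal translation equation \eqref{4.5}. Your integration of the $f=c_1$ branch and your elimination of the affine non-constant cases are also fine.

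The paper's proof diverges from yours precisely at the hard step. Because the reduced equation is literally the Levi--Civita singular minimal equation, the authors do not re-derive the exclusion of $f''g''\neq 0$; they simply invoke \cite[Theorem~4.1]{Lopez5}, which already establishes that a solution exists only when $f$ or $g$ is constant, and then use the $f\leftrightarrow g$ symmetry of \eqref{4.5} to take $f=f_0$ without loss of generality. Your plan to redo this exclusion by mimicking the end of the proof of Theorem~3 is plausible in outline but is not the route taken here, and as written it remains only a sketch: the separated form $A(y)B(z)=C(y)+D(z)$ and the subsequent polynomial contradiction are asserted rather than computed, so the argument is incomplete until those identities are actually verified (note also that Theorem~3 has the additional $g'$-terms from $\nabla$, so the algebra is not identical).

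One further point: your dismissal of the $g$-constant case as ``subsumed by Theorem~5 (its $c_2=0$ subcase, with the sign of $\alpha$ reversed)'' is not right. Theorem~5 concerns the type $y=f(x)+g(z)$, a different graph direction; the surface $x=f(y)+c_0$ cannot in general be rewritten in that form globally, and the sign-of-$\alpha$ remark has no clear meaning here. The paper handles this instead by the symmetry of \eqref{4.5} in $f$ and $g$: since the equation is invariant under swapping $(f,y)\leftrightarrow(g,z)$, the $g$-constant branch is the same family up to relabeling, and the theorem is stated only for $f=c_1$.
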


\begin{proof}
Without of loss of generality we may assume that $\mathbf{u}=\partial _{x}$.
Then Eq. \eqref{2.6} implies 
\begin{equation}
\frac{\left[ 1+\left( g^{\prime }\right) ^{2}\right] f^{\prime \prime }+%
\left[ 1+\left( f^{\prime }\right) ^{2}\right] g^{\prime \prime }}{1+\left(
f^{\prime }\right) ^{2}+\left( g^{\prime }\right) ^{2}}=\frac{\alpha }{f+g},
\label{4.5}
\end{equation}%
in which the roles of $f$ and $g$ are symmetric. Eq. \eqref{4.4} has a
solution provided that $f$ or $g$ is a constant, see \cite[Theorem 4.1]%
{Lopez5}. Thanks to the symmetry, we can assume $f=f_{0}\in \mathbb{R}.$
Thereby, Eq. \eqref{4.5} reduces to 
\begin{equation}
\frac{g^{\prime \prime }}{1+\left( g^{\prime }\right) ^{2}}=\frac{\alpha }{%
f_{0}+g}.  \label{4.6}
\end{equation}%
Put 
\begin{equation}
g^{\prime }=q,\text{ }q^{\prime }=\frac{dq}{dg}\frac{dg}{dz}=\frac{g^{\prime
\prime }}{g^{\prime }},\text{ }q=q\left( g\right) .  \label{4.7}
\end{equation}%
By considering Eq. \eqref{4.7} into Eq. \eqref{4.6} we get%
\begin{equation}
\frac{qq^{\prime }}{1+q^{2}}=\frac{\alpha }{f_{0}+g}.  \label{4.8}
\end{equation}%
A first integration of Eq. \eqref{4.8} with respect to $g$ yields $%
1+q^{2}=c^{2}\left( f_{0}+g\right) ^{2\alpha }$ or%
\begin{equation*}
dz=\pm \frac{dg}{\sqrt{c^{2}\left( f_{0}+g\right) ^{2\alpha }-1}}
\end{equation*}%
and the proof is completed by a first integration.
\end{proof}

\section{Conclusions}

In this study, new perspectives on the singular minimality of immersed surfaces
in $\mathbb{R}^{3}$ were introduced. These, in particular the $\nabla -$%
singular minimality, provided us non-trivial and different results from the
obtained one with respect to the Levi-Civita connection on $\mathbb{R}^{3},$
see \cite{Lopez5,Lopez7}. Our results were found with respect to a
horizontal vector and it will not make  big difference when the vector is assumed to be
vertical. Yet, the problem of finding $\nabla -$ and $D-$%
singular minimal translation surfaces in $\mathbb{R}^{3}$ with respect to an
arbitrary vector is open.


\begin{thebibliography}{99}
\bibitem{Agashe1} N. S. Agashe and M.R. Chafle, \textit{A semi-symmetric
non-metric connection on a Riemannian manifold}, Indian J. Pure Appl. Math. 
\textbf{23} (1992), no. 6, 399-409.

\bibitem{Agashe2} N. S. Agashe and M.R. Chafle, \textit{On submanifolds of a
Riemannian manifold with a semi-symmetric non-metric connection}, Tensor 
\textbf{55} (1994), no. 2, 120-130.

\bibitem{Akyol} M. A. Akyol and S. Beyendi, \textit{Riemannian submersion
endowed with a semi-symmetric non-metric connection}, Konuralp J. Math. 
\textbf{6} (2018), no. 1, 188-193.

\bibitem{Aydin1} M. E. Aydin and A. Mihai, \textit{Translation hypersurfaces
and Tzitzeica Translation hyper-surfaces of the Euclidean space}, Proc. Rom.
Acad. Ser. A, Math. Phys. Tech. Sci. Inf. Sci. \textbf{16} (2015), no. 4,
477-483.

\bibitem{EAM} M. E. Aydin, A. Erdur and M. Ergut, \textit{Singular minimal
translation graphs in Euclidean spaces}, to appear in J. Korean Math. Soc.

\bibitem{Bohme} R. B\"{o}hme, S. Hildebrant and E. Taush, \textit{The
two-dimensional analogue of the catenary}, Pac. J. Math. \textbf{88} (1980),
no. 2, 247-278.

\bibitem{Chaubey} S. K. Chaubey and A. Yildiz, \textit{Riemannian manifolds
admitting a new type of semisymmetric nonmetric connection}, Turk. J. Math. 
\textbf{43} (2019), no. 4, 1887-1904.

\bibitem{Darboux} J. G. Darboux, \textit{Th\'{e}orie G\'{e}nerale des
Surfaces, Livre I}, Gauthier-Villars, Paris, 1914.

\bibitem{De} U. C. De and A. Barman, \textit{On a type of semisymmetric
metric connection on a Riemannian manifold}, Publ. Inst. Math., Nouv. S\'{e}%
r. \textbf{98} (112) (2015), 211-218.

\bibitem{Dierkes1} U. Dierkes, \textit{A Bernstein result for enery
minimizing hypersurfaces}, Cal. Var. Part. Differ. Equ. \textbf{1} (1993),
no. 1, 37-54.

\bibitem{Dierkes2} U. Dierkes, \textit{Singular minimal surfaces}, Geometric
Analysis and Nonlinear Partial Differential Equations, Springer, Berlin,
Heidelberg (2003), 176-193.

\bibitem{Dillen1} F. Dillen, L. Verstraelen and G. Zafindratafa, \textit{A
generalization of the translation surfaces of Scherk}, Diff. Geom. in honor
of Radu Rosca (KUL) (1991), 107-109.

\bibitem{Dillen2} F. Dillen, I. Van de Woestyne, L. Verstraelen and J. T.
Walrave, \textit{The surface of Scherk in E3: A special case in the class of
minimal surfaces defined as the sum of two curves}, Bull. Inst. Math. Acad.
Sin. (N.S.) \textbf{26} (1998), no. 4 , 257-267.

\bibitem{Dillen3} F. Dillen, W. Goemans and I. Van de Woestyne, \textit{%
Translation surfaces of Weingarten type in 3-space}, Bull. Transilv. Univ. Bra\c{s}ov, Ser. III \textbf{1} (2008), no. 50, 109-122.

\bibitem{Dogru} Y. Dogru, \textit{On some properties of submanifolds of a
Riemannian manifold endowed with a semi-symmetric non-metric connection},
An. \c{S}t. Univ. Ovidius Constanta \textbf{19} (2011), no. 3, 85-100.

\bibitem{Gil} J. B. Gil, \textit{The catenary (almost) everywhere}, Bolet\'{%
ı}n de la AMV \textbf{XII} (2005), no. 2, 251-258.

\bibitem{Goemans} W. Goemans and I. Van de Woestyne, \textit{Translation and
homothetical lightlike hypersurfaces of semi-Euclidean space}, Kuwait J.
Sci. Eng. \textbf{38} (2011), no. 2, 35-42.

\bibitem{Gozutok} A. Gozutok and E. Esin, \textit{Tangent bundle of
hypersurface with semi symmetric metric connection}, Int. J. Contemp. Math.
Sci. \textbf{7} (2012), no. 6, 279-289.

\bibitem{Gray} A. Gray, Modern Differential Geometry of Curves and Surfaces
with Mathematica, CRC Press, 2nd Ed., 1998.

\bibitem{HL1} T. Hasanis and R. L\'{o}pez, \textit{Translation surfaces in
Euclidean space with constant Gaussian curvature}, Arxiv 8 Sept 2018:
https://arxiv.org/abs/1809.02758v1.

\bibitem{H} T. Hasanis, \textit{Translation surfaces with non-zero constant
mean curvature in Euclidean space}, J. Geom. \textbf{110} (2019), no. 20.

\bibitem{HL2} T. Hasanis and R. L\'{o}pez, \textit{Classification and
construction of minimal translation surfaces in Euclidean space}, Result
Math. \textbf{75} (2020), no.1, 1-22.

\bibitem{Hayden} H. A. Hayden, \textit{Subspaces of a space with torsion},
Proc. London Math. Soc. \textbf{S2-34} (1932), no. 1, 27-50.

\bibitem{Imai} T. Imai, \textit{Notes on semi-symmetric metric connections},
Tensor \textbf{24} (1972), 293-296.

\bibitem{Inoguchi} J. Inoguchi, R. L\'{o}pez and M. I. Munteanu, \textit{%
Minimal translation surfaces in the Heisenberg group $Nil_3$}, Geom.
Dedicata \textbf{161} (2012), 221--231.

\bibitem{Jung} S. D. Jung, H. Liu and Y. Liu, \textit{Weingarten affine
translation surfaces in Euclidean 3-space}, Results Math. \textbf{72}
(2017),no. 4, 1839-1848.

\bibitem{Lima} B. P. Lima, N. L. Santos and P.A. Sousa, \textit{Generalized
translation hypersurfaces in Euclidean space}, J. Math. Anal. Appl. \textbf{%
470} (2019), no. 2, 1129-1135.

\bibitem{Liu1} H. Liu, \textit{Translation surfaces with constant mean
curvature in 3- dimensional spaces}, J. Geom. \textbf{64} (1999), 141-149.

\bibitem{Liu2} H. Liu and Y. Yu, \textit{Affine translation surfaces in
Euclidean 3-space}, Proc. Japan Acad. \textbf{89}(A) (2013), 111-113.

\bibitem{Liu3} H. Liu and S. D. Jung, \textit{Affine translation surfaces
with constant mean curvature in Euclidean 3-space}, J. Geom. \textbf{108}
(2017), 423-428.

\bibitem{Lopez1} R. L\'{o}pez, \textit{Minimal translation surfaces in
hyperbolic space}, Beitr. Algebra Geom. \textbf{52} (2011), no. 1, 105-112.

\bibitem{Lopez2} R. L\'{o}pez and M. I. Munteanu, \textit{Minimal
translation surfaces in $Sol_3$}, J. Math. Soc. Japan \textbf{64} (2012),
no. 3, 985-1003.

\bibitem{Lopez3} R. L\'{o}pez and M. Moruz, \textit{Translation and
homothetical surfaces in Euclidean space with constant curvature}, J. Korean
Math. Soc. \textbf{52} (2015), no. 3, 523-535.

\bibitem{Lopez4} R. L\'{o}pez, Constant Mean Curvature Surfaces with
Boundary, Springer-Verlag, Berlin (2013).

\bibitem{Lopez5} R. L\'{o}pez, \textit{Seperation of variables in equations
of mean curvature type} , Poc. R. Soc. Edinb. \textbf{146} (2016), no. 5,
1017-1035.

\bibitem{Lopez6} R. L\'{o}pez and O. Perdomo, \textit{Minimal translation
surfaces in Euclidean space}, J. Geom. Anal. \textbf{27} (2017), no. 4 ,
2926-2937.

\bibitem{Lopez7} R. L\'{o}pez, \textit{Invariant singular minimal surfaces},
Ann. Glob. Anal. Geom. \textbf{53} (2018), no. 4, 521-541.

\bibitem{Lopez8} R. L\'{o}pez, \textit{The Dirichlet problem for the $\alpha 
$ -singular minimal surface equation} , Arch. Math. \textbf{112} (2019), no.
2, 213-222.

\bibitem{Lopez10} R. L\'{o}pez, \textit{The two- dimensional analogue of the
Lorentzian catenary and the Dirichlet problem}, Pacific J. Math. \textbf{305}
(2020), no. 2, 693-719.

\bibitem{Moruz} M. Moruz and M. I. Munteanu, \textit{Minimal translation
hypersurfaces in $\mathbb{E}^4$}, J. Math. Anal. Appl. \textbf{439} (2016),
no. 2, 798--812.

\bibitem{Munteanu} M. I. Munteanu, O. Palmas and G. Ruiz-Hernandez, \textit{%
Minimal translation hypersurfaces in Euclidean spaces}, Mediterran. J. Math. 
\textbf{13} (2016), no. 5, 2659--2676.

\bibitem{Murat} C. Murathan and C. Ozg\"{u}r, \textit{Riemannian manifolds
with a semi-symmetric metric connection satisfying some semisymmetry
conditions}, Proc. Est. Acad. Sci. \textbf{57} (2008), no. 4, 210-216.

\bibitem{Nakao} Z. Nakao, \textit{Submanifolds of a Riemannian manifold with
semisymmetric metric connections}, Proc. Amer. Math. Soc. \textbf{54}
(1976), no. 1, 261-266.

\bibitem{Zengin2} F. Ozen Zengin, S. Altay Demirbag and S. A. Uysal, \textit{%
Some vector fields on a Riemannian manifold with semi-symmetric metric
connection}, Bull. Iranian Math. Soc. \textbf{38} (2012), no. 2, 479-490.

\bibitem{Ozgur} C. Ozgur, \textit{On submanifolds of a Riemannian manifold
with a semi-symmetric non-metric connection}, Kuwait J. Sci. Eng. \textbf{37}
(2010), no. 2, 17-30.

\bibitem{Pana} D. E. Panayotounakos, T.I. Zarmpoutis, \textit{Construction
of exact parametric or closed form solutions of some unsolvable classes of
nonlinear ODEs (Abel's nonlinear ODEs of the first kind and relative
degenerate equations)}, Int. J. Math. Math. Sci. 2011, p. 387429.

\bibitem{Polyanin} A. D. Polyanin and V. F. Zaitsev, Handbook of Exact
Solutions for Ordinary Differential Equations, Chapman \& Hall/CRC, Boca
Raton, Fla, USA, 2nd Ed., 2003.

\bibitem{Scherk} H. F. Scherk, \textit{Bemerkungen \"{u}ber die kleinste Fl%
\"{a}che innerhalb gegebener Grenzen}, J. Reine Angew. Math. \textbf{13}
(1835), 185-208.

\bibitem{Sekerci} G. A. Sekerci, S. Sevinc and A. C. Coken, \textit{On the
translation hypersurfaces with Gauss map G satisfying $\triangle G = AG$},
Miscolc Math. Notes \textbf{20} (2019), no. 2, 1215-1225.

\bibitem{Seo} K. Seo, \textit{Translation hypersurfaces with constant
curvature in space form}, Osaka J. Math. \textbf{50} (2013), no. 3, 631-641.

\bibitem{Sun} H. Sun, \textit{On affine translation surfaces of constant
mean curvature}, Kumamoto J. Math. \textbf{13} (2000), 49--57.

\bibitem{Verstraelen} L. Verstraelen, J.Walrave and S. Yaprak, \textit{The
minimal translation surfaces in Euclidean space}, Soochow J. Math. \textbf{20%
} (1994), no. 1, 77--82.

\bibitem{Wang} Y. Wang, \textit{Minimal translation surfaces with respect to
semi-symmetric connections in $\mathbb{R}^{3}$ and $\mathbb{R}_{1}^{3}$},
ArXiv 28 Mar 2020, https://arxiv.org/abs/2003.12682v1.

\bibitem{Yang1} D. Yang and Y. Fu, \textit{On affine translation surfaces in
affine space}, J. Math. Anal. Appl. \textbf{440} (2016), no. 2, 437--450.

\bibitem{Yang2} D. Yang, J. Zhang and Y. Fu, \textit{A note on minimal
translation graphs in Euclidean space}, Mathematics \textbf{7} (2019), no.
10.

\bibitem{Yano1} K. Yano, \textit{On semi-symmetric metric connection}, Rev.
Roumaine Math. Pures Appl. \textbf{15} (1970), 1579-1586.

\bibitem{Yano2} K. Yano and M. Kon, Structures on Manifolds, Series in Pure
Math., World Scientific, 1984.

\bibitem{Yoon1} D. W. Yoon, \textit{On the Gauss map of translation surfaces
in Minkowski 3-space}, Taiwanese J. Math. \textbf{6} (2002), 389-398.

\bibitem{Yoon2} D. W. Yoon, C. W. Lee and M. K. Karacan, \textit{Some
translation surfaces in the 3-dimensional Heisenberg group}, Bull. Korean
Math. Soc. \textbf{50} (2013), no. 4, 1329-1343.

\bibitem{Yucesan2} A. Yucesan and E. Yasar, \textit{Non-degenerate
hypersurfaces of a semi- Riemannian manifold with a semi-symmetric
non-metric connection}, Math. Rep. (Bucur) \textbf{14(64)} (2012), no. 2,
209-219.
\end{thebibliography}
\end{document}